\begin{document}

\theoremstyle{plain} \newtheorem{theorem}{Theorem}[section]
\theoremstyle{plain} \newtheorem{lemma}[theorem]{Lemma}
\theoremstyle{plain} \newtheorem{proposition}[theorem]{Proposition}
\newtheorem{axioms}[theorem]{Axioms}
\newtheorem{corollary}[theorem]{Corollary}
\newtheorem{problem}[theorem]{Problem}
\newtheorem{conjecture}[theorem]{Conjecture}
\newtheorem{conjecture*}[]{Conjecture}
\newtheorem{matheorem}[theorem]{Main Theorem}
\newtheorem*{theoremidcol}{Theorem \ref{ResThmIdcol}}
\newtheorem*{theoremsyzcol}{Theorem \ref{ResThmSyzcol}}
\newtheorem*{theoremreg}{Theorem \ref{ColThmReg}}

\newcommand{\nr}{\refstepcounter{theorem}  
                   \noindent {\thetheorem .}}
\newcommand{\defi}{\medskip \noindent {\it Definition \nr} }
\newcommand{\defifin}{\medskip}
\newcommand{\eks}{\medskip \noindent {\it Example \nr} }
\newcommand{\eksfin}{\medskip}
\newcommand{\rem}{\medskip \noindent {\it Remark \nr} }
\newcommand{\remfin}{\medskip}
\newcommand{\spm}{\medskip \noindent {\it Question \nr} }
\newcommand{\spmfin}{\medskip}
\newcommand{\obs}{\medskip \noindent {\it Observation \nr} }
\newcommand{\obsfin}{\medskip}
\newcommand{\note}{\medskip \noindent {\it Notation \nr} }
\newcommand{\notefin}{\medskip}

\newcommand{\llabel}{\addtocounter{theorem}{-1}
\refstepcounter{theorem} \label}

\newcommand{\psp}[1]{{{\bf P}^{#1}}}
\newcommand{\psr}[1]{{\bf P}(#1)}
\newcommand{\op}{{\mathcal O}}
\newcommand{\opw}{\op_{\psr{W}}}
\newcommand{\go}{\op}

\newcommand{\ini}[1]{\text{in}(#1)}
\newcommand{\gin}[1]{\text{gin}(#1)}
\newcommand{\kk}{{\Bbbk}}
\newcommand{\pd}{\partial}
\newcommand{\vardel}{\partial}
\renewcommand{\tt}{{\bf t}}


\newcommand{\coh}{{{\text{{\rm coh}}}}}


\newcommand{\modv}[1]{{#1}\text{-{mod}}}
\newcommand{\modstab}[1]{{#1}-\underline{\text{mod}}}

\newcommand{\sut}{{}^{\tau}}
\newcommand{\sumit}{{}^{-\tau}}
\newcommand{\til}{\thicksim}

\newcommand{\totp}{\text{Tot}^{\prod}}
\newcommand{\dsum}{\bigoplus}
\newcommand{\dprod}{\prod}
\newcommand{\lsum}{\oplus}
\newcommand{\lprod}{\Pi}

\newcommand{\La}{{\Lambda}}

\newcommand{\sirstj}{\circledast}

\newcommand{\she}{\EuScript{S}\text{h}}
\newcommand{\cm}{\EuScript{CM}}
\newcommand{\cmd}{\EuScript{CM}^\dagger}
\newcommand{\cmri}{\EuScript{CM}^\circ}
\newcommand{\cler}{\EuScript{CL}}
\newcommand{\clerd}{\EuScript{CL}^\dagger}
\newcommand{\clerri}{\EuScript{CL}^\circ}
\newcommand{\gor}{\EuScript{G}}
\newcommand{\gF}{\mathcal{F}}
\newcommand{\gG}{\mathcal{G}}
\newcommand{\gM}{\mathcal{M}}
\newcommand{\gE}{\mathcal{E}}
\newcommand{\gD}{\mathcal{D}}
\newcommand{\gI}{\mathcal{I}}
\newcommand{\gP}{\mathcal{P}}
\newcommand{\gK}{\mathcal{K}}
\newcommand{\gL}{\mathcal{L}}
\newcommand{\gS}{\mathcal{S}}
\newcommand{\gC}{\mathcal{C}}
\newcommand{\gO}{\mathcal{O}}
\newcommand{\gJ}{\mathcal{J}}
\newcommand{\gU}{\mathcal{U}}
\newcommand{\mm}{\mathfrak{m}}

\newcommand{\dlim} {\varinjlim}
\newcommand{\ilim} {\varprojlim}

\newcommand{\CM}{\text{CM}}
\newcommand{\Mon}{\text{Mon}}


\newcommand{\Kom}{\text{Kom}}


\newcommand{\EH}{{\mathbf H}}
\newcommand{\res}{\text{res}}
\newcommand{\Hom}{\text{Hom}}
\newcommand{\inhom}{{\underline{\text{Hom}}}}
\newcommand{\Ext}{\text{Ext}}
\newcommand{\Tor}{\text{Tor}}
\newcommand{\ghom}{\mathcal{H}om}
\newcommand{\gext}{\mathcal{E}xt}
\newcommand{\id}{\text{{id}}}
\newcommand{\im}{\text{im}\,}
\newcommand{\codim} {\text{codim}\,}
\newcommand{\resol}{\text{resol}\,}
\newcommand{\rank}{\text{rank}\,}
\newcommand{\lpd}{\text{lpd}\,}
\newcommand{\coker}{\text{coker}\,}
\newcommand{\supp}{\text{supp}\,}
\newcommand{\Ad}{A_\cdot}
\newcommand{\Bd}{B_\cdot}
\newcommand{\Fd}{F_\cdot}
\newcommand{\Gd}{G_\cdot}


\newcommand{\sus}{\subseteq}
\newcommand{\sups}{\supseteq}
\newcommand{\pil}{\rightarrow}
\newcommand{\vpil}{\leftarrow}
\newcommand{\rpil}{\leftarrow}
\newcommand{\lpil}{\longrightarrow}
\newcommand{\inpil}{\hookrightarrow}
\newcommand{\pils}{\twoheadrightarrow}
\newcommand{\projpil}{\dashrightarrow}
\newcommand{\dotpil}{\dashrightarrow}
\newcommand{\adj}[2]{\overset{#1}{\underset{#2}{\rightleftarrows}}}
\newcommand{\mto}[1]{\stackrel{#1}\longrightarrow}
\newcommand{\vmto}[1]{\stackrel{#1}\longleftarrow}
\newcommand{\mtoelm}[1]{\stackrel{#1}\mapsto}

\newcommand{\eqv}{\Leftrightarrow}
\newcommand{\impl}{\Rightarrow}

\newcommand{\iso}{\cong}
\newcommand{\te}{\otimes}
\newcommand{\into}[1]{\hookrightarrow{#1}}
\newcommand{\ekv}{\Leftrightarrow}
\newcommand{\equi}{\simeq}
\newcommand{\isopil}{\overset{\cong}{\lpil}}
\newcommand{\equipil}{\overset{\equi}{\lpil}}
\newcommand{\ispil}{\isopil}
\newcommand{\vvi}{\langle}
\newcommand{\hvi}{\rangle}
\newcommand{\susneq}{\subsetneq}
\newcommand{\sgn}{\text{sign}}


\newcommand{\xd}{\check{x}}
\newcommand{\ortog}{\bot}
\newcommand{\tL}{\tilde{L}}
\newcommand{\tM}{\tilde{M}}
\newcommand{\tH}{\tilde{H}}
\newcommand{\tvH}{\widetilde{H}}
\newcommand{\tvh}{\widetilde{h}}
\newcommand{\tV}{\tilde{V}}
\newcommand{\tS}{\tilde{S}}
\newcommand{\tT}{\tilde{T}}
\newcommand{\tR}{\tilde{R}}
\newcommand{\tf}{\tilde{f}}
\newcommand{\ts}{\tilde{s}}
\newcommand{\tp}{\tilde{p}}
\newcommand{\tr}{\tilde{r}}
\newcommand{\tfst}{\tilde{f}_*}
\newcommand{\empt}{\emptyset}
\newcommand{\bfa}{{\bf a}}
\newcommand{\la}{\lambda}
\newcommand{\bfen}{{\mathbf 1}}
\newcommand{\ep}{e}
\newcommand{\Jc}{{\mathcal J}}

\newcommand{\ome}{\omega_E}

\newcommand{\bevis}{{\bf Proof. }}
\newcommand{\demofin}{\qed \vskip 3.5mm}
\newcommand{\nyp}[1]{\noindent {\bf (#1)}}
\newcommand{\demo}{{\it Proof. }}
\newcommand{\demodone}{\demofin}
\newcommand{\parg}{{\vskip 2mm \addtocounter{theorem}{1}  
                   \noindent {\bf \thetheorem .} \hskip 1.5mm }}

\newcommand{\red}{{\text{red}}}
\newcommand{\lcm}{{\text{lcm}}}
\newcommand{\pt}{{\scriptscriptstyle{\bullet}}}


\newcommand{\dl}{\Delta}
\newcommand{\cdel}{{C\Delta}}
\newcommand{\cdelp}{{C\Delta^{\prime}}}
\newcommand{\dlst}{\Delta^*}
\newcommand{\Sdl}{{\mathcal S}_{\dl}}
\newcommand{\lk}{\text{lk}}
\newcommand{\lkd}{\lk_\Delta}
\newcommand{\lkp}[2]{\lk_{#1} {#2}}
\newcommand{\del}{\Delta}
\newcommand{\delr}{\Delta_{-R}}
\newcommand{\dd}{{\dim \del}}

\renewcommand{\aa}{{\bf a}}
\newcommand{\bb}{{\bf b}}
\newcommand{\cc}{{\bf c}}
\newcommand{\xx}{{\bf x}}
\newcommand{\yy}{{\bf y}}
\newcommand{\zz}{{\bf z}}
\newcommand{\mv}{{\xx^{\aa_v}}}
\newcommand{\mF}{{\xx^{\aa_F}}}

\newcommand{\pnm}{{\bf P}^{n-1}}
\newcommand{\opnm}{{\go_{\pnm}}}
\newcommand{\ompnm}{\omega_{\pnm}}

\newcommand{\pn}{{\bf P}^n}
\newcommand{\hele}{{\bf Z}}

\newcommand{\dt}{{\displaystyle \cdot}}
\newcommand{\st}{\hskip 0.5mm {}^{\rule{0.4pt}{1.5mm}}}              
\newcommand{\disk}{\scriptscriptstyle{\bullet}}

\def\CC{{\mathbb C}}
\def\GG{{\mathbb G}}
\def\ZZ{{\mathbb Z}}
\def\NN{{\mathbb N}}
\def\RR{{\mathbb R}}
\def\OO{{\mathbb O}}
\def\QQ{{\mathbb Q}}
\def\VV{{\mathbb V}}
\def\PP{{\mathbb P}}
\def\EE{{\mathbb E}}
\def\FF{{\mathbb F}}
\def\AA{{\mathbb A}}

\title [Colorful Helly theorem and colorful resolutions of ideals]
{The colorful Helly theorem and colorful resolutions of ideals}
\author { Gunnar Fl{\o}ystad}
\address{ Dep. of Mathematics\\
          Johs. Brunsgt. 12\\
          5008 Bergen\\
          Norway}   
        
\email{ gunnar@mi.uib.no}

\begin{abstract}
We demonstrate that the topological Helly theorem and the algebraic
Auslander-Buchsbaum may be viewed as different versions of the same phenomenon. 
Using this correspondence we show how the colorful Helly theorem of I.Barany 
and its generalizations
by G.Kalai and R.Meshulam translates to the algebraic side. 
Our main results are algebraic generalizations
of these translations, which in particular give a syzygetic version
of Helly's theorem.

\end{abstract}

\maketitle

{\Small 2000 MSC : Primary 13D02. Secondary 13F55, 05E99.}

\section{Introduction}

The classical Helly theorem is one of the basic results of convex geometry.
It was subsequently generalized by Helly to a topological setting. In the last decade 
topological techniques has been infused into the study of resolutions of a monomial
ideal $I$ in a polynomial ring $S$ by the technique of cellular resolutions
\cite{MiSt}. 
Such resolutions are constructed
from a cell complex with monomial labellings on the cells satisfying certain 
topological conditions. We show that these topological conditions are
the same as the hypothesis of Helly's theorem, and that the conclusion of 
Helly's theorem corresponds to the weaker version of the Auslander-Buchsbaum
theorem one gets if one does not involve the concept of depth, but only that of 
dimension of the quotient ring  $S/I$.

The Helly theorem was generalized by I.B\'ar\'any \cite{Ba} to a colorful
version. By considering nerve complexes this has again recently been
abstracted and generalized by G.Kalai and R.Meshulam \cite{KaMe}. Their
result involves a pair $M \sus X$ wher $M$ is a matroidal complex and $X$ 
is a simplicial complex on a vertex set $V$. In the case when $M$ is 
a transversal matroid on a partition $V = V_1 \cup \cdots \cup V_{d+1}$
it corresponds to the colorful version of Barany. 
This latter result may be translated to the algebraic side in two ways. 
Either by the
notion of cellular resolutions, or by the Stanley-Reisner ring of the 
nerve complex.
In fact these two translations are related by Alexander duality.

The translation by the Stanley-Reisner ring of the nerve complex
admits algebraic generalizations to multigraded ideals. We consider a 
polynomial ring
$S$ whose set of  variables $X$ is partioned into a disjoint union 
$X = X_1 \cup \cdots \cup X_r$ and the variables in  $X_i$ have multidegree
$e_i$, the $i$'th coordinate vector. (One may think of the variables in $X_i$ as
having color $i$.) We investigate $\NN^r$-graded ideals $I$ of $S$ and their 
resolutions. The following is our first main result.
(The notion of $(d+1)$-regular ideal is explained in the beginning of Section 
\ref{ResSec}.)

\begin{theoremidcol} 
Let $S$ be a polynomial ring where the variables have
$r$ colors. Let $I$ be a $(d+1)$-regular ideal in $S$, homogeneous for 
the $\NN^r$-grading. Suppose $I$ has elements of pure color $i$ for each
$i = 1, \ldots, r$. Then for each color vector $(a_1, \ldots, a_r)$
where $\sum a_i = d+1$, there exists an element of $I$ with this color vector.
\end{theoremidcol}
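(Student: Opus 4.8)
The plan is to reduce, one variable at a time, to the case of a monomial ideal in a polynomial ring having a single variable of each colour --- where the assertion is immediate --- while keeping regularity under control. Write $\aa=(a_1,\dots,a_r)$ and $|\aa|=\sum_i a_i$. After a flat base change I may assume $\kk$ is infinite. Since a $(d+1)$-regular ideal is generated in total degree $\le d+1$ and $I$ is $\NN^r$-homogeneous, computing the multidegree-$(d_i e_i)$ component of $I$ shows that each $I\cap\kk[X_i]$ already contains a nonzero element $f_i$ of degree $d_i\le d+1$. If $a_i\ge d_i$ for some $i$ we are done at once: $f_i$ times a monomial of the complementary multidegree lies in $I_\aa$ and is nonzero; so assume $a_i\le d_i-1$ for all $i$. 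Finally replace $I$ by its saturation: this is again $(d+1)$-regular with a pure-colour element in every colour, and since $H^0_{\mm}(S/I)$ lives in total degrees $\le d$ we have $I_\aa=(I^{\mathrm{sat}})_\aa$ when $|\aa|=d+1$, so the statement transfers; thus we may also assume $\operatorname{depth}(S/I)\ge 1$.

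Now I would induct on $n-r$. If some colour class $X_i$ has at least two variables, pick a linear form $\ell$ in the $\kk$-span of $X_i$. It is $\NN^r$-homogeneous, so $\bar S:=S/(\ell)$ is a polynomial ring, $\NN^r$-graded with one fewer variable of colour $i$, and $\bar I:=(I+(\ell))/(\ell)$ is $\NN^r$-graded. For general $\ell$ the image $\bar f_i$ of $f_i$ is nonzero (a nonzero polynomial has only finitely many linear factors), so $\bar I$ still has a pure-colour element in every colour; moreover if $\bar I$ contains a nonzero element of multidegree $\aa$, any lift to $S_\aa$ differs from an element of $I_\aa$ by a multiple of $\ell$, so $I_\aa\ne 0$. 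Hence it suffices to treat $\bar I$, which has one variable fewer --- provided $\bar I$ is again $(d+1)$-regular. The base case $n=r$ is clear: there $I$ is monomial and contains a power $x_i^{k_i}$ (with $k_i\le d+1$) of each variable, so $S/I$ is artinian, whence $\operatorname{reg}(S/I)$ equals the top nonvanishing degree of $S/I$; $(d+1)$-regularity then forces $(S/I)_\aa=0$ for all $|\aa|\ge d+1$, i.e.\ $I_\aa=S_\aa\ne 0$.

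The main obstacle is exactly that proviso: bounding $\operatorname{reg}(\bar S/\bar I)$ by $\operatorname{reg}(S/I)$. When $\ell$ can be taken inside the span of $X_i$ and a nonzerodivisor on $S/I$ --- possible, for instance, when that span meets no positive-dimensional associated prime, which is why I first passed to the saturation so that $\operatorname{depth}(S/I)\ge 1$ --- the sequence $0\to(S/I)(-e_i)\xrightarrow{\ \ell\ }S/I\to\bar S/\bar I\to 0$ gives $\operatorname{reg}(\bar S/\bar I)\le\operatorname{reg}(S/I)\le d$ immediately. In general, however, no colour class need contain such an $\ell$ --- $\operatorname{Ass}(S/I)$ can contain a positive-dimensional prime swallowing an entire colour class --- and then the crude estimate from the four-term sequence $0\to(0:_{S/I}\ell)\to S/I\xrightarrow{\ell}S/I\to\bar S/\bar I\to 0$ is far too weak. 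The point where the colourful hypothesis must really be exploited is precisely here: because $S/I$ carries a pure-colour element in every single colour, its associated primes and local cohomology are constrained enough that some colour class still admits a linear form $\ell$ with $\operatorname{reg}(\bar S/\bar I)\le d$, and establishing that (by playing $\operatorname{Ass}(S/I)$ and the $f_j$ against one another) is where essentially all of the content of the theorem lies; the rest of the argument is formal.
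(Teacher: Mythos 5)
Your reduction framework (saturating, then killing general linear forms one colour class at a time down to the case of one variable per colour, where the statement is immediate for an artinian $(d+1)$-regular monomial ideal) is sensible as a plan, but the proof is not complete: the one step you flag as open --- that some colour class admits a linear form $\ell$ with $\operatorname{reg}(S/(I+(\ell)))\le d$ --- is exactly where the theorem lives, and you give no argument for it. Passing to the saturation buys you a linear nonzerodivisor somewhere in $S_1$, but not inside a prescribed colour class: an associated prime of positive dimension may contain the whole span of some $X_i$ even though $I$ contains a pure power of that colour, and then no $\ell$ in the span of $X_i$ is even filter-regular, so neither the short exact sequence nor the four-term sequence yields a usable regularity bound. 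Since you explicitly defer this point, what you have is a reduction of the theorem to an unproved (and not obviously true) lemma, not a proof.

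For comparison, the paper does not attempt any one-variable-at-a-time descent and indeed remarks that it knows no direct proof at the level of the ideal itself. Instead it collapses all colours at once via $p_\lambda\colon S\to T=\kk[y_1,\dots,y_r]$, sending $x\in X_i$ to $\lambda(x)y_i$, uses the pure-colour elements to produce a complete intersection $H=(P_1,\dots,P_r)\sus I$ mapping onto $K=(y_1^{a_1},\dots,y_r^{a_r})$, and compares the \emph{last} terms of the free resolutions of $S/H$, $S/I$, $T/K$ and $T/J$: Lemma \ref{ColLemArt} (an artinian duality, i.e.\ socle, argument) forces every top basis element of the resolution of $T/J$ to be hit from the resolution of $S/I$, which gives $\operatorname{reg}(J)\le\operatorname{reg}(I)$ (Theorem \ref{ColThmReg}). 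The theorem itself is then obtained by a \emph{descending} induction from the $r$-th syzygy down to $\Omega^1=I$ through the Eliahou--Kervaire resolution of $\mm^{d+1}$ (Theorem \ref{ResThmSyzcol}). In other words, the regularity transfer you need is extracted from the tail of the resolution, not from a nonzerodivisor argument; to salvage your route you would need an independent proof of your colour-class-restricted regularity lemma, and the paper's detour strongly suggests that this is genuinely nontrivial.
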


This provides a generalization of the result of Kalai and Meshulam \cite{KaMe} in
the case of transversal matroids, since in our situation it applies to 
multigraded ideals and not
just to squarefree monomial ideals. Also the 
number of colors $r$ and the regularity $d+1$ may be arbitrary, and not the same
as in \cite{KaMe}. (But in the monomial case this generalization would 
have been a relatively
easy consequence of \cite{KaMe}.)
The theorem above is however a special case of our second main result
which is the following syzygetic version.

\begin{theoremsyzcol}
Let $S$ be a polynomial ring where the variables 
have $r$ colors. Let $I$ be an ideal in $S$ which is 
homogeneous for the $\NN^r$-grading, is generated in degree $d+1$ and 
is $d+1$-regular, so it has linear resolution. Suppose $I$ has elements of 
pure color $i$ for each $i = 1, \ldots, r$, and let $\Omega^l$
be the $l$'th syzygy module in an $\NN^r$-graded resolution of $S/I$
(so $\Omega^1 = I$). For each $l = 1, \ldots, r$ and each color vector 
$\aa = (a_1, \ldots, a_r)$ where $\sum a_i = d+l$ and $s$
the number of nonzero coordinates of $\aa$,
the vector space dimension of $(\Omega^l)_\aa$ is greater or equal to 
$\binom{s-1}{l-1}$.
\end{theoremsyzcol}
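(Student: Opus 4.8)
The first step is to translate the claim into a statement about multigraded Betti numbers. Since $I$ is generated in degree $d+1$ and has a linear resolution, each syzygy module $\Omega^l$ is generated in total degree $d+l$; hence for a multidegree $\aa$ with $|\aa|=d+l$ every element of $(\Omega^l)_\aa$ is a minimal generator and
\[
\dim_\kk(\Omega^l)_\aa=\beta_{l,\aa}(S/I)=\dim_\kk\Tor^S_l(S/I,\kk)_\aa .
\]
So what must be shown is the lower bound $\beta_{l,\aa}(S/I)\ge\binom{s-1}{l-1}$ on the $\NN^r$-graded Betti numbers of $S/I$, a statement purely about the minimal free resolution.

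My plan is to prove this by induction, peeling off one color at a time, with the two reductions being the colon ideal $(I:x)$, which lowers the regularity from $d+1$ to $d$, and the quotient $S/(I+xS)$, which removes a color, for a suitable variable $x$. First dispose of colors not in $\supp(\aa)$: if $a_j=0$, pass to $S/(X_j)$; in the long exact sequences obtained by adjoining the variables of $X_j$ one at a time, the relevant connecting terms $\Tor_\bullet(S/(\,\cdot:x),\kk)_{\aa-e_j}$ all vanish because $\aa-e_j\notin\NN^r$, so $\beta_{l,\aa}(S/I)$ is unchanged and the number of colors drops — once one checks that the image of $I$ still satisfies the hypotheses. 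Thus one may assume $\supp(\aa)=\{1,\dots,r\}$, so the bound reads $\binom{r-1}{l-1}$. Now reindex so that $a_r$ is smallest among the coordinates of $\aa$, and (after a coordinate change inside color $r$ that puts a power of $x$ into the pure-color element $f_r$) pick $x\in X_r$. The short exact sequence
\[
0\longrightarrow (S/(I:x))(-e_r)\xrightarrow{\ \cdot x\ } S/I\longrightarrow S/(I+xS)\longrightarrow 0
\]
gives a long exact sequence in $\Tor(-,\kk)_\aa$. The key structural inputs are: $(I:x)$ has no elements in degrees $<d$ (because $I\subseteq\mathfrak{m}^{d+1}$), it contains the $d$-th power of $x$ coming from $f_r$, and — using that $I$ is $(d+1)$-regular — it is a "generated-in-degree-$d$, $d$-regular" ideal carrying pure-color elements in every color; for a color $i\ne r$ this is where Theorem~\ref{ResThmIdcol} enters, producing an element of $I$ of color $de_i+e_r$ which, divided by $x$, lands in $(I:x)\cap\kk[X_i]$ in degree $de_i$. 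So the inductive hypothesis applies to $(I:x)$ with $d$ lowered by one and to $S/(I+xS)$ with a color removed, and the two contributions they feed into the long exact sequence are the two terms of Pascal's identity $\binom{r-1}{l-1}=\binom{r-2}{l-1}+\binom{r-2}{l-2}$; the choice of $r$ with $a_r$ minimal is what makes the supports shrink in the right way (and, when $a_r\ge2$, lets one iterate the colon step first).

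The main obstacle is that the long exact sequence only squeezes $\beta_{l,\aa}(S/I)$ between ranks of connecting homomorphisms, so one has to argue those maps cannot cancel the two contributions. I expect this to follow from the $\NN^r$-grading: once $a_r$ is minimal the multidegrees $\aa-ke_r$ fall out of the positive orthant almost immediately, which annihilates most terms and breaks the long exact sequence into short pieces precisely in the multidegrees in play. The genuinely fiddly part is the bookkeeping of which colors still carry a pure-color element of the (possibly lowered) degree after each colon or color-deletion, and this is exactly what the hypothesis that $I$ has a pure-color element in every color, together with Theorem~\ref{ResThmIdcol}, is there to supply; this is the step I would budget the most effort for. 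A useful check throughout is the one-variable-per-color case, where the hypotheses are rigid enough to force $I=\mathfrak{m}^{d+1}$: there $\beta_{l,\aa}(S/\mathfrak{m}^{d+1})=\binom{s-1}{l-1}$ can be computed directly from the hook Schur-functor (Eagon–Northcott) resolution of $\mathfrak{m}^{d+1}$ by counting semistandard tableaux of hook shape $(d+1,1^{l-1})$ and content $\aa$, the first column being forced to be $\supp(\aa)$ with its minimum deleted, which is what yields $\binom{s-1}{l-1}$ on the nose.
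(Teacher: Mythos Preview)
Your proposal contains a circularity that cannot be unwound: you invoke Theorem~\ref{ResThmIdcol} to produce an element of $I$ of color $de_i+e_r$ and hence a pure-color-$i$ element of $(I:x)$, but in this paper Theorem~\ref{ResThmIdcol} is obtained \emph{only} as the case $l=1$ of Theorem~\ref{ResThmSyzcol}, and the author explicitly notes that no direct proof is known. You need Theorem~\ref{ResThmIdcol} at the very parameters $(r,d+1)$ under consideration, so no ordering of the induction makes it available in time. Even granting Theorem~\ref{ResThmIdcol}, the step ``divided by $x$, lands in $(I:x)\cap\kk[X_i]$'' fails: an element of $I_{de_i+e_r}$ has the form $\sum_{x_j\in X_r} g_j x_j$ with $g_j\in\kk[X_i]_d$, and nothing forces any single summand $g_j x_j$ to lie in $I$; if $I_{de_i+e_r}$ is one-dimensional then only one direction of $x$ in $\langle X_r\rangle$ works for color $i$, and different colors $i$ can demand incompatible directions. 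Finally, the obstacle you yourself flag is genuine and your sketch does not resolve it: in the main case $\supp(\aa)=\{1,\ldots,r\}$ one has $a_r\geq 1$, so $\aa-e_r\in\NN^r$ and the adjacent $\Tor$ groups in the long exact sequence need not vanish; the connecting maps can therefore cancel the two Pascal contributions, and no inequality survives.

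The paper's proof proceeds by an entirely different mechanism. One maps $S\to T=\kk[y_1,\ldots,y_r]$ by sending each $x\in X_i$ to $\lambda(x)y_i$ and compares the minimal resolution $F_\bullet$ of $S/I$ with the explicit Eliahou--Kervaire resolution $A_\bullet$ of $T/\mathfrak m^{d+1}$. A duality argument (Lemma~\ref{ColLemArt}, via Theorem~\ref{ColThmReg}) shows that in homological degree $r$ every basis element of $A_r$ is the image of some basis element of $F_r$; then \emph{descending} induction on $l$, reading off the explicit Eliahou--Kervaire differential, propagates this from $A_{l+1}$ to $A_l$. The bound $\binom{s-1}{l-1}$ is exactly the number of Eliahou--Kervaire basis elements of $A_l$ in multidegree~$\aa$.
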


It is noteworthy that we do not prove Thorem \ref{ResThmIdcol}
directly and do not know how to do it. 
Rather we show Theorem \ref{ResThmSyzcol} by {\it descending} induction on the 
$\Omega^l$, starting from $\Omega^r$, and then Theorem \ref{ResThmIdcol} is 
easily deduced from the
special case of $\Omega^1$.

Letting $T = \kk[y_1, \ldots, y_r]$ be the polynomial ring in $r$ variables, 
our technique involves comparing the ideal $I \sus S$ and its natural image 
in $T$ by mapping the variables in $X_i$ to sufficiently general 
constant multiples of $y_i$. Along this vein we also 
prove the following.

\begin{theoremreg} 
Let $I \sus S$ be a multigraded ideal with elements of pure color $i$
for each $i$, and let $J \sus T$ be an ideal containing
the image of $I$ by the map $S \pil T$ (described above).
Then the regularity of $I$ is greater than or equal to the regularity of $J$.
\end{theoremreg}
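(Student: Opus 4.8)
The plan is to interpose the specialization and prove the two inequalities
$\operatorname{reg}_T J\le\operatorname{reg}_T\pi(I)\le\operatorname{reg}_S I$, where $\pi\colon S\to T$ is the map of the statement. Write $\mathfrak q=\ker\pi$. Since the variables of each $X_i$ go to nonzero scalar multiples of $y_i$, the ideal $\mathfrak q$ is generated by $m:=|X|-r$ linear forms (for each $i$, the $X_i$-linear forms killed by $\pi$); these form a regular sequence and $S/\mathfrak q\cong T$. Because $I$ has a nonzero element $g_i$ of pure color $i$, of some multidegree $a_ie_i$, and the scalars defining $\pi$ are general, $\pi(g_i)$ is a nonzero multiple of $y_i^{a_i}$; hence $(y_1^{a_1},\dots,y_r^{a_r})\subseteq\pi(I)$, so $\pi(I)$ is $\mm_T$-primary and proper (the case $I=S$ being trivial), and $T/\pi(I)$ has finite length.

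For the first inequality, $\pi(I)\subseteq J\subseteq T$ with $T/\pi(I)$ of finite length, so $J/\pi(I)$ is a finite-length submodule of $T/\pi(I)$. Then $\operatorname{reg}_T(J/\pi(I))$ is just its top nonzero degree, which is at most that of $T/\pi(I)$, i.e. at most $\operatorname{reg}_T(T/\pi(I))=\operatorname{reg}_T\pi(I)-1$. Feeding this into $0\to\pi(I)\to J\to J/\pi(I)\to0$ gives $\operatorname{reg}_T J\le\max\bigl(\operatorname{reg}_T\pi(I),\operatorname{reg}_T(J/\pi(I))+1\bigr)=\operatorname{reg}_T\pi(I)$.

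The substance is the second inequality. I would use the standard fact that a linear form $\theta$ which is a nonzerodivisor on $M/H^0_\mm(M)$ satisfies $\operatorname{reg}_S(M/\theta M)\le\operatorname{reg}_S M$ (since then $0:_M\theta\subseteq H^0_\mm(M)$ has finite length and top degree $\le\operatorname{reg}_S M$, and the two short exact sequences built from multiplication by $\theta$ on $M$ give the bound). The point is that a \emph{general} element $\theta$ of the linear space $\mathfrak q\cap S_1$ is such a nonzerodivisor for $M=S/I'$ for \emph{every} ideal $I'\supseteq I$: if an associated prime $P$ of $S/I'$ contained $\mathfrak q$, then $P/\mathfrak q$ would be a prime of $S/\mathfrak q=T$ containing $\pi(I')\supseteq\pi(I)\supseteq(y_1^{a_1},\dots,y_r^{a_r})$, hence $P/\mathfrak q=\mm_T$ and $P=\mm$; so no non-maximal associated prime of $S/I'$ contains $\mathfrak q$, and a general $\theta\in\mathfrak q\cap S_1$ avoids all of them. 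Now take general $\theta_1,\dots,\theta_m$ spanning $\mathfrak q\cap S_1$; at stage $k$, $\theta_k$ is a nonzerodivisor modulo $H^0_\mm$ on $S/(I+(\theta_1,\dots,\theta_{k-1}))$, so inductively $\operatorname{reg}_S\bigl(S/(I+\mathfrak q)\bigr)\le\operatorname{reg}_S(S/I)$. Finally $S/(I+\mathfrak q)\cong T/\pi(I)$ has finite length, so its regularity is the same whether computed over $S$ or over $T$, whence $\operatorname{reg}_T\pi(I)=\operatorname{reg}_T(T/\pi(I))+1\le\operatorname{reg}_S(S/I)+1=\operatorname{reg}_S I$. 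Chaining the two inequalities proves the theorem.

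The only real obstacle is that the linear forms we are permitted to cut by lie in the small subspace $\mathfrak q\cap S_1$ rather than in all of $S_1$; the pure-color hypothesis is used precisely to rule out, at every stage of the induction, a non-maximal associated prime containing $\mathfrak q$, and without some such input the hyperplane-section step would be illegitimate. One assumes throughout that $\kk$ is infinite (harmless, as extending scalars changes neither side), and the elementary identities $\operatorname{reg}(\mathfrak a)=\operatorname{reg}(S/\mathfrak a)+1$ for $\mathfrak a$ nonzero and proper, and ``the regularity of a finite-length module equals its top nonzero degree, independently of the ambient polynomial ring'', are applied only where their hypotheses manifestly hold.
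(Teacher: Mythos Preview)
Your proof is correct and takes a genuinely different route from the paper's.

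The paper argues by constructing an explicit comparison map of minimal free resolutions. Choosing pure-color elements $P_i\in I$ mapping to $y_i^{a_i}$, it places the complete intersections $H=(P_1,\dots,P_r)$ and $K=(y_1^{a_1},\dots,y_r^{a_r})$ into a commutative square with $S/I$ and $T/J$, lifts everything to minimal free resolutions, and invokes a lemma (proved via Matlis duality) saying that in the lift of $T/K\to T/J$ the top-degree map hits every generator of $A_r$ nontrivially. Commutativity then forces, for each generator $e_i$ of $A_r$, some generator $f_j$ of $F_r$ with $Sf_j\to Te_i$ nonzero; since $T/J$ is artinian its regularity is $\max\{\deg e_i-r\}$, and the degree inequality follows.

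Your argument instead factors through $\pi(I)$ and handles the two inequalities by standard regularity technology: a short exact sequence with finite-length cokernel for $\operatorname{reg} J\le\operatorname{reg}\pi(I)$, and iterated hyperplane sections by filter-regular linear forms from $\ker\pi$ for $\operatorname{reg}\pi(I)\le\operatorname{reg} I$. The pure-color hypothesis enters exactly where you say, to ensure that no non-maximal associated prime at any stage can contain $\ker\pi$. This is cleaner and more self-contained for the stated theorem. What the paper's approach buys, however, is that the explicit resolution comparison---in particular the fact that each basis element of $A_r$ is the image of a basis element of $F_r$---is the launching point for the descending induction that proves the stronger syzygetic Theorem~\ref{ResThmSyzcol}; your argument, while sufficient here, does not set up that machinery.

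One small remark: you assert that the variables go to \emph{nonzero} multiples of $y_i$, which is slightly more than the paper assumes. What is actually needed (and what follows from the hypothesis that some pure-color element in each color has nonzero image) is only that $\pi$ is surjective, so that $\ker\pi$ is a complete intersection of $|X|-r$ linear forms; your argument goes through verbatim with that weaker input.
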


While we study colored {\it homogeneous} ideals, 
until now they have mostly been studied in the monomial setting. 
One of the first is perhaps Stanley \cite{Sta} studying
balanced simplicial complexes. 
Then followed Bj\"orner, Frankl, Stanley \cite{BFS}
which classified flag $f$-vectors of ${\mathbf a}$-balanced 
Cohen-Macaulay simplicial complexes where ${\mathbf a} \in \NN^r$. 
A more recent paper is Babson, Novik \cite{BN} where they develop shifting
theory for colored homogeneous ideals, but rather focus on monomial ideals
to give another approach to \cite{BFS}. 
Nagel and Reiner
\cite{NR} give a very nice construction of the cellular resolution of 
colored monomial ideals associated to strongly stable ideals generated in a
single degree. 

\medskip
To give some more perspective on our approach, in studying graded ideals
in a polynomial ring much attention has been given to homogeneous (i.e. 
$\NN$-graded)
ideals and to monomial (i.e. $\NN^n$-graded) ideals. 
One way to approach intermediate cases is toric
ideals where the variables are attached degrees in some $\NN^r$.  
A toric ideal is uniquely determined by this association of degrees in $\NN^r$,
but
their class does not encompass the classes of monomial or homogeneous ideals.
Our approach is probably the simplest way of building a bridge encompassing
these two extremal 
cases: let the variables only have the unit vectors
in $\NN^r$ as degrees.

\medskip

The organization of the paper is as follows. In Section 2 we show that the
topological Helly theorem and the algebraic Auslander-Buchsbaum theorem are
basically different versions of the same phenomenon. In Section 3 we consider
the colorful Helly theorem and how it translates to the algebraic side,
either via cellular resolutions or via the nerve complex.
In Section 4 we state and prove our main results, Theorems \ref{ResThmIdcol},
\ref{ResThmSyzcol}, and \ref{ColThmReg}.

\medskip
\noindent {\bf Acknowledgements.} We thank B.Sturmfels for comments on the results
and in particular for providing the idea to Theorem \ref{ColThmReg}. 

\section{The topological Helly theorem and the Auslander-Buchsbaum theorem}

\subsection{The Helly theorem}
The classical Helly theorem is one of the founding theorems in convex geometry,
along with Radon's theorem and Carath\'eodory's theorem, and is the one which 
has been generalized in most directions.

\begin{theorem}[Helly 1908] Let $\{K_i\}_{i \in B}$ be a family of
convex subsets of $\RR^d$. If $\cap_{i \in B}K_i$ is empty, then there is  
$A \sus B$ of cardinality $d+1$ such that $\cap_{i \in A}K_i$ is empty.
\end{theorem}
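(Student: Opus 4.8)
The plan is to prove the contrapositive: if every $d+1$ of the sets have a common point, then $\bigcap_{i \in B} K_i \ne \emptyset$. (Here I assume $|B| \ge d+1$, which the statement presupposes.) First I would reduce to the case that $B$ is finite. If the $K_i$ are moreover compact — some hypothesis of this kind is needed, as the family $K_n = [n, \infty) \sus \RR$ shows, where every finite subfamily meets but the total intersection is empty — then compactness via the finite intersection property lets one pass from the finite case to the general one. So assume $|B| = n$ is finite.

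The engine is Radon's theorem: any $d+2$ points of $\RR^d$ (repetitions allowed) split into two index classes $A_1 \sqcup A_2$ whose convex hulls have a common point. I would establish this first, since it is elementary linear algebra: $d+2$ points $p_0, p_1, \ldots, p_{d+1}$ in $\RR^d$ are affinely dependent, so $\sum_i \lambda_i p_i = 0$ with $\sum_i \lambda_i = 0$ and the $\lambda_i$ not all zero; separating the indices according to the sign of $\lambda_i$ and normalizing by $s = \sum_{\lambda_i > 0} \lambda_i > 0$ exhibits on each side a convex combination of the corresponding points, and the two combinations are the same point.

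Helly then follows by induction on $n \ge d+1$, the case $n = d+1$ being vacuous. For $n \ge d+2$: applying the inductive hypothesis to each of the $n$ subfamilies obtained by deleting one index (legitimate, since $n-1 \ge d+1$), choose $x_j \in \bigcap_{i \in B \setminus \{j\}} K_i$ for each $j \in B$. As $n \ge d+2$, Radon's theorem yields a partition $B = A_1 \sqcup A_2$ and a point $p \in \text{conv}\{x_j : j \in A_1\} \cap \text{conv}\{x_j : j \in A_2\}$. For an arbitrary $i \in B$, disjointness forces $i$ to lie outside $A_1$ or outside $A_2$; say $i \notin A_1$. Then $x_j \in K_i$ for every $j \in A_1$ (because $j \ne i$), so by convexity $K_i \supseteq \text{conv}\{x_j : j \in A_1\} \ni p$. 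Since $i$ was arbitrary, $p \in \bigcap_{i \in B} K_i$, as desired.

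The main obstacle is conceptual rather than computational: once Radon's theorem is available the rest is a short induction, and Radon itself is a one-line affine-dependence argument. The only point demanding real care is the passage from arbitrary to finite index sets — without compactness (or outright finiteness of $B$) the statement is false — so I would spell out the setting in which the theorem is meant. One could alternatively deduce it from the topological Helly theorem, through the nerve of the family, which is the bridge to algebra developed in the rest of this section; but for this classical convex statement the Radon route is the most direct.
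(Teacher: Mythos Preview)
The paper does not supply a proof of this statement: Helly's 1908 theorem is simply quoted as a classical fact, after which the paper moves on to the 1930 topological version and its connection to cellular resolutions. So there is no ``paper's own proof'' to compare against.

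Your argument is the standard and correct one: Radon's theorem via affine dependence of $d+2$ points, followed by induction on $|B|$. The logic is clean, and your observation that some finiteness or compactness hypothesis is needed (with the counterexample $K_n = [n,\infty)$ in $\RR$) is exactly right; the paper's statement is indeed loose on this point, as such classical quotations often are. Your closing remark about the alternative route through the topological Helly theorem and the nerve complex is also apt, since that is precisely the perspective the paper develops in the rest of the section.
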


Helly generalized this result in 1930 to a topological version.
We shall be interested in it in the context of polyhedral complexes.
Let $X$ be a bounded polyhedral complex. So $X$ is a finite family of 
convex polytopes with the following properties.

\begin{itemize}
\item If $P$ is in $X$, then all the faces of $P$ are in $X$.
\item If $P$ and $Q$ are in $X$, then $P \cap Q$ is a face in both
$P$ and $Q$.
\end{itemize}

Let  $V$ be the vertices of $X$, i.e. the $0$-dimensional faces.
Let $\{V_i\}_{i \in B}$ be a collection of subsets of $V$, and let $C_i$ be the 
subcomplex of $X$ induced from the vertex set $V_i$, consisting of the faces 
of $X$ whose vertices are all in $V_i$. Now if 
$X$ can be embedded in $\RR^d$, 
and for every $B^\prime \sus B$
the intersection $\cap_{i \in B^\prime} C_i$ is either empty or acyclic, 
then the topological realizations $|C_i|$ fulfill the conditions of the 
topological Helly theorem.

\begin{theorem}[Helly 1930] \label{TopThmTopHel} 
Let $\{K_i\}_{i \in B}$ be a family of closed
subsets in $\RR^d$ with empty interesection, 
such that for each $B^\prime \sus B$ the intersection
$\cap_{i \in B^\prime} K_i$ is either empty or acyclic over the field 
${\mathbb R}$. Then there
is $A \sus B$ of cardinality $\leq d+1$ such that $\cap_{i \in A} K_i$
is empty.
\end{theorem}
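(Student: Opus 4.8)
The plan is to reduce the topological statement to the classical (convex) Helly theorem via a nerve-type argument. First I would pass from the family $\{K_i\}_{i \in B}$ to the collection of nonempty intersections: for each $B' \sus B$ set $K_{B'} = \cap_{i \in B'} K_i$, which by hypothesis is either empty or acyclic over $\RR$. The combinatorial data of which $K_{B'}$ are nonempty is exactly the nerve $\gN$ of the cover, a simplicial complex on vertex set $B$, and the acyclicity hypothesis is precisely the ``good cover'' condition needed for a nerve lemma. So the first key step is to invoke the nerve lemma (in the form valid for closed covers of a normal/paracompact space, e.g.\ after realizing the $K_i$ inside a polyhedral or CW model as in the discussion preceding the theorem) to conclude that the union $\cup_{i \in B} K_i$ is homotopy equivalent, or at least $\RR$-homology equivalent, to $|\gN|$.

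The second step is the homological input that makes this a Helly-type statement rather than just a nerve lemma. One does \emph{not} directly know that $\cup K_i$ is contractible; instead the argument is by contradiction on the dimension $d$. Suppose every $A \sus B$ with $|A| \le d+1$ has $\cap_{i \in A} K_i \ne \emptyset$. Then the nerve $\gN$ contains the full $d$-skeleton of the simplex on $B$, so $|\gN|$ is $(d-1)$-connected; in particular its reduced $\RR$-homology vanishes in degrees $< d$. The classical Helly theorem, applied after embedding everything in $\RR^d$, can be packaged homologically: a good cover of a subset of $\RR^d$ whose nerve has the full $d$-skeleton must have nonempty total intersection. The cleanest route is Kalai's or Helly's original inductive argument: use that $\cap_{i \in B} K_i = \emptyset$ forces a nontrivial class in $\tilde H_{d-1}$ of the nerve of the cover restricted to a generic hyperplane section, contradicting the $(d-1)$-connectivity just established. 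Concretely I would set up the induction on $d$, slicing $\RR^d$ by a hyperplane $H$ chosen generically with respect to the polyhedral structure, and observe that $\{K_i \cap H\}$ is again a good cover in $\RR^{d-1} \cong H$ with empty total intersection, whose $d$-element subintersections are all nonempty; the inductive hypothesis then produces a $d$-subset of $B$ with empty intersection in $H$, and a Mayer--Vietoris / separation argument lifts this to the claimed $(d+1)$-subset in $\RR^d$, or else directly yields the needed nonvanishing homology class.

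The base case $d = 0$ is immediate (closed sets in $\RR^0$ with empty intersection must have some single one empty). The third step is bookkeeping: checking that the hyperplane section preserves the ``empty or acyclic'' property of all subintersections — this uses that a generic affine slice of an acyclic polyhedral set is again acyclic, which is where the polyhedral-complex reformulation before the theorem is doing real work, since for arbitrary closed sets one would need to approximate first.

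\medskip
\noindent\textbf{Main obstacle.} The delicate point is the homological bookkeeping in the inductive step: arranging that a generic hyperplane slice of the whole configuration simultaneously (i) keeps every nonempty subintersection acyclic, (ii) does not accidentally make a previously nonempty subintersection empty for the subsets of size $\le d$ that we are tracking, and (iii) is compatible with the nerve lemma on the sliced family. Controlling all three at once is what forces the passage to the polyhedral-complex setting and a careful choice of generic $H$; the purely algebraic-topological nerve lemma is standard, and the reduction to classical Helly is conceptually clean, but making the slicing argument rigorous is the crux.
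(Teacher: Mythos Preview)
The paper does not give a proof of Theorem \ref{TopThmTopHel}; it is quoted as a classical result of Helly (1930) and used as background. So there is no paper proof to compare against directly. That said, your proposal has a genuine gap worth naming.

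The slicing step is the problem. Your claim that ``a generic affine slice of an acyclic polyhedral set is again acyclic'' is false. A contractible planar region shaped like the letter $\mathsf{U}$, sliced by a horizontal line through its middle, yields two disjoint arcs --- not acyclic. More generally, acyclicity (unlike convexity) is not stable under hyperplane sections, generic or not, and there is no polyhedral hypothesis that rescues it. So the induction on $d$ by slicing cannot be carried through: condition (i) in your ``main obstacle'' already fails, and with it the whole inductive mechanism. You were right to flag this step as the crux, but it is not a matter of delicate bookkeeping; the underlying geometric claim is wrong.

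The standard route, and the one the paper implicitly leans on (compare Proposition \ref{TopProNerve}(b)), avoids slicing entirely and stays purely homological. From the nerve lemma applied to every subfamily you get that for each $R \sus B$ the restriction $N_R$ is homology-equivalent to $\cup_{i \in R} K_i \sus \RR^d$, hence $\tH_i(N_R;\RR) = 0$ for all $i \geq d$; i.e.\ the nerve $N$ is $d$-Leray. Now argue by contradiction: if every $A$ of size $\leq d+1$ has $\cap_{i \in A} K_i \neq \emptyset$ but $\cap_{i \in B} K_i = \emptyset$, pick a \emph{minimal} nonface $\sigma$ of $N$. Then $|\sigma| \geq d+2$ and $N_\sigma = \partial\sigma$ is a sphere of dimension $|\sigma|-2 \geq d$, so $\tH_{|\sigma|-2}(N_\sigma;\RR) \neq 0$, contradicting $d$-Leray. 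No induction on $d$, no hyperplanes.
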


\begin{corollary}  \label{TopCorMin} Every minimal subset $A$ of $B$ such that 
$\cap_{i \in A} K_i$ is empty, has cardinality $\leq d+1$.
\end{corollary}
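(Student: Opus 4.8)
The plan is to deduce this immediately from Theorem \ref{TopThmTopHel} by restricting attention to the subfamily indexed by $A$. First I would observe that if $A \sus B$ is minimal with $\cap_{i \in A} K_i$ empty, then the family $\{K_i\}_{i \in A}$ itself still satisfies the hypotheses of the topological Helly theorem: its total intersection $\cap_{i \in A} K_i$ is empty by assumption, and for every $A^\prime \sus A$ we have $A^\prime \sus B$, so $\cap_{i \in A^\prime} K_i$ is either empty or acyclic over $\RR$ by the hypothesis imposed on the original family $\{K_i\}_{i \in B}$.

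Next, applying Theorem \ref{TopThmTopHel} to the family $\{K_i\}_{i \in A}$ produces a subset $A^\prime \sus A$ with $|A^\prime| \leq d+1$ and $\cap_{i \in A^\prime} K_i = \empt$. Since $A$ was chosen minimal among subsets of $B$ whose associated intersection is empty, and $A^\prime$ is a subset of $A$ with exactly that property, minimality forces $A^\prime = A$. Hence $|A| \leq d+1$, which is the assertion of the corollary.

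I do not anticipate any genuine obstacle here; the statement is essentially a reformulation of Theorem \ref{TopThmTopHel}. The only point requiring (entirely routine) care is that the hypotheses are closed under passing to a subfamily, and this is immediate because the "empty or acyclic" condition is quantified over \emph{all} subsets $B^\prime \sus B$, hence in particular over all subsets of $A$. One could also phrase the argument contrapositively: were $|A| > d+1$, Theorem \ref{TopThmTopHel} would supply a strictly smaller subset of $A$ with empty intersection, contradicting minimality.
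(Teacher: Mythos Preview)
Your argument is correct and is exactly the content the paper has in mind: the paper does not write out a proof for Corollary~\ref{TopCorMin} but simply remarks afterward that, given the hypotheses, the existence statement of Theorem~\ref{TopThmTopHel} and the ``every minimal $A$'' statement of the corollary are really equivalent. Your restriction-to-the-subfamily-and-invoke-minimality argument is precisely the routine verification of that equivalence.
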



\rem Note that in Theorem \ref{TopThmTopHel} we only require the existence
of an $A$ with $\cap_{i \in A} K_i$ empty while in Corollary \ref{TopCorMin}
we say that every minimal $A$ with $\cap_{i \in A} K_i$ is empty has
cardinality $\leq d+1$. Given the hypothesis these two properties are
really equivalent.
\remfin

\subsection{Cellular resolutions} We now make a monomial labelling of the
vertices in $X$ by letting the vertex $v$ be labeled by
\begin{equation} \label{TopLabMon} \mv = \Pi_{v \not \in C_i} x_i. 
\end{equation}
In other words the variable $x_i$ is distributed to all vertices outside
$C_i$.

Before proceeding we recall some basic theory  of monomial labellings of cell
complexes and their associated cellular complexes of free modules
over a polynomial ring, following \cite[Ch.4]{MiSt}.

Let $\kk$ be a field and let $\tilde{\gC}_\cdot(X; \kk)$
be the reduced chain complex of $X$ over $\kk$. The term
$\tilde{\gC}_i(X;\kk)$ is the vector space $\oplus_{\dim F = i} \kk F$ 
with basis consisting of the $i$-dimensional polytopes of $X$, 
and differential
\[ \vardel_i(F) = \sum_{{\text{facets } G \sus F}} 
\sgn(G,F) \cdot G, \]
where $\sgn(G,F)$ is either $1$ or $-1$ and can be chosen consistently
making $\vardel$ a differential.
   
Now given a polynomial ring $S = \kk[x_i]_{i \in B}$ we may label 
each vertex $v$ of $X$ by a monomial $\mv$.
Each face $F$ is then labeled by $\mF$ which is the least common multiple 
$\lcm_{v \in F} \{ \mv\}$. Now we construct the {\it cellular complex} 
$\gF(X;\kk)$ consisting of free $S$-modules. The term $\gF_i(X ; \kk)$
is the free $S$-module $\oplus _{\dim F = i} SF$ with basis consisting of the
$i$-dimensional polytopes of $X$. The basis element $F$ is given degree
${\bf a}_F$. The differential is 
given by
\[  \vardel_i(F) = \sum_{\text{facets }F \sus G} 
\sgn(G,F) \frac{{\mathbf x}^{\aa_F}}{{\mathbf x}^{\aa_G}} \cdot G, \] 
which makes it homogeneous of degree $0$. 

We are interested in the case that $\gF(X;\kk)$ gives a free resolution
of $\coker \vardel_0 = S/I$ where $I$ is the monomial ideal generated by
the monomials $\mv$. For each $\bb \in \NN^r$, let $X_{\leq \bb}$ be the
subcomplex of $X$ induced on the vertices $v$ such that $\mv$ divides $\xx^\bb$.
This is the subcomplex consisting of all faces $F$ such that 
$\mF$ divides  $\xx^\bb$.  According to \cite[Ch.4]{MiSt}, $\gF(X;\kk)$ is a
free cellular resolution iff $X_{\leq \bb}$ is acyclic over $\kk$ or empty for 
every $\bb \in \NN^r$. Let $\ep_i$ be the $i$'th unit coordinate vector
in $\NN^r$ and let $\bfen = \sum_{i=1}^r \ep_i$.

The following provides an unexpected connection between Helly's theorem and 
resolutions of square free monomial ideals.

\begin{theorem} \label{TopThmHelRes}
Let $X$ be an acyclic polyhedral complex.
There is a one-to-one correspondence between the following.

a. Finite families $\{C_i\}_{i \in B}$ of induced subcomplexes of $X$ 
such that for each $B^\prime \sus B$, the intersection 
$\cap_{i \in B^\prime} C_i$ is either empty or acyclic.

b. Monomial labellings of $X$ with square free monomials in 
$\kk[x_i]_{i \in B}$ such that $\gF(X;\kk)$ gives a cellular resolution 
of the ideal generated by these monomials.

The correspondence is given as follows. 
Given a monomial labeling define $C_i, \,i \in B$, by letting 
$C_i = X_{\leq \bfen - \ep_i}$, 
and given a family $\{C_i\}$ define the monomials $\mv$, by 
$\mv = \Pi_{v \not \in C_i} x_i$.
\end{theorem}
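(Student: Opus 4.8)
The plan is to establish the correspondence by showing that the two constructions described in the statement — passing from a monomial labelling to the family $\{C_i\}$ via $C_i = X_{\leq \bfen - \ep_i}$, and passing from a family $\{C_i\}$ to the labelling via $\mv = \Pi_{v \notin C_i} x_i$ — are mutually inverse, and that under this bijection condition (a) corresponds exactly to condition (b). I would organize this around the single key observation that for a squarefree monomial labelling, the subcomplexes $X_{\leq \bb}$ that control the cellular resolution criterion of \cite[Ch.4]{MiSt} are precisely the finite intersections of the $C_i$'s. Once that observation is in place, both directions and the inverse-bijection statement fall out.

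First I would verify that the two assignments are inverse to each other. Starting from a squarefree labelling, $C_i := X_{\leq \bfen - \ep_i}$ consists of those vertices $v$ with $\mv \mid \xx^{\bfen - \ep_i}$, i.e. those $v$ whose label has $x_i$-exponent zero (since $\mv$ is squarefree); so $v \in C_i \iff x_i \nmid \mv$, equivalently $v \notin C_i \iff x_i \mid \mv$, and therefore $\Pi_{v \notin C_i} x_i = \Pi_{x_i \mid \mv} x_i = \mv$, recovering the labelling. Conversely, starting from a family $\{C_i\}$ where each $C_i$ is an induced subcomplex, define $\mv = \Pi_{v \notin C_i} x_i$; this is squarefree, and $x_i \mid \mv \iff v \notin C_i$, so $X_{\leq \bfen - \ep_i}$ has vertex set $\{v : x_i \nmid \mv\} = \{v : v \in C_i\}$, which since $C_i$ is induced is exactly $C_i$. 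This is routine and I would present it compactly.

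Next comes the core identification. Given a squarefree labelling, for any $\bb \in \NN^r$ write $T_\bb = \{i : b_i = 0\}$ (the colors "missing" from $\bb$). For a vertex $v$, $\mv \mid \xx^\bb$ holds iff the support of $\mv$ avoids $T_\bb$, i.e. iff $x_i \nmid \mv$ for all $i \in T_\bb$, i.e. iff $v \in C_i$ for all $i \in T_\bb$, i.e. iff $v \in \cap_{i \in T_\bb} C_i$. Since all the complexes involved are induced subcomplexes of $X$ and an intersection of induced subcomplexes is induced on the intersection of vertex sets, this gives $X_{\leq \bb} = \cap_{i \in T_\bb} C_i$ as subcomplexes of $X$ (with the convention that the empty intersection is all of $X = X_{\leq \bb}$ when $\bb \geq \bfen$, which is acyclic by hypothesis on $X$). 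Running $\bb$ over all of $\NN^r$, the subsets $T_\bb$ run over all subsets of $B$ (taking $\bb$ to be $\bfen$ minus the indicator of a chosen subset). Hence: the collection of subcomplexes $\{X_{\leq \bb}\}_{\bb \in \NN^r}$ coincides, up to the trivially-acyclic ones, with the collection of finite intersections $\{\cap_{i \in B'} C_i\}_{B' \sus B}$. Therefore "$X_{\leq \bb}$ is empty or acyclic for all $\bb$" (the \cite{MiSt} resolution criterion, condition (b)) is equivalent to "$\cap_{i \in B'} C_i$ is empty or acyclic for all $B' \sus B$" (condition (a)). Combining this equivalence with the inverse-bijection verification completes the proof.

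I expect the only real subtlety — the "main obstacle", though it is minor — to be bookkeeping about edge cases and about the difference between a subcomplex and its vertex set: one must use that the $C_i$ are \emph{induced} subcomplexes so that an intersection of vertex sets determines the intersection of subcomplexes, and one must handle $\bb$ with some coordinate $\geq 2$ (irrelevant, since squarefree $\mv \mid \xx^\bb$ depends only on which coordinates of $\bb$ are zero) and $\bb \geq \bfen$ (where $X_{\leq \bb} = X$, acyclic by the standing assumption that $X$ is acyclic, matching the empty intersection on side (a)). None of this requires new ideas beyond the support/exponent translation above.
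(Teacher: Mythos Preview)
Your proposal is correct and follows essentially the same approach as the paper: both hinge on the identification $X_{\leq \bb} = \cap_{\{i : b_i = 0\}} C_i$ for squarefree $\bb$, which immediately translates the cellular-resolution criterion into the acyclicity condition on intersections. Your write-up is more explicit than the paper's (you spell out the mutual-inverse verification and the edge cases $\bb \geq \bfen$ and $b_i \geq 2$, which the paper leaves implicit), but the underlying argument is the same.
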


\begin{proof}
Assume part a. To show part b. we must show that $X_{\leq \bb}$ is either
acyclic or empty for $\bb \in \NN^B$. It is enough to show this for 
$\bb \in \{0,1\}^B$ since we have a square free monomial labelling. 
If $\bb = \bfen$, then $X_{\leq \bb} = X$ which is acyclic. 
If $\bb < \bfen$ we have an intersection 
\[ X_{\leq \bb} = \cap_{\{i;b_i = 0\}} X_{\leq \bfen - \ep_i} \]
and $C_i = X_{\leq \bfen - \ep_i}$ by (\ref{TopLabMon}).

Now given part b., then $\cap_{i \in B^\prime} C_i$ is equal to 
$\cap_{i \in B^\prime} X_{\leq \bfen - \ep_i}$ which is $X_{\leq \bfen - 
\sum_{i \in B^\prime} \ep_i}$ and therefore is empty or acyclic.
\end{proof}

\subsection{The Auslander-Buchsbaum theorem}

We shall now translate the statements of the Helly theorem to statements
concerning the monomial ideal. First we have the following.

\begin{lemma} \label{TopLemIdeal} 
Let $A \sus B$. Then $\cap_{i \in A} C_i$ is empty 
if and only if the ideal $I$ is contained in $\langle x_i, i \in A \rangle$.
\end{lemma}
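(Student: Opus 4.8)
The plan is to unwind both sides of the equivalence into a statement purely about vertices of $X$ and then observe that these statements coincide. First I would record how membership in $C_i$ is visible in the labels: by the defining formula (\ref{TopLabMon}), $\mv = \Pi_{v \not\in C_i} x_i$, so the variable $x_i$ divides $\mv$ precisely when $v \notin C_i$. Equivalently, $C_i$ is the induced subcomplex of $X$ on the vertex set $V_i = \{\, v \in V : x_i \nmid \mv \,\}$. (This is of course consistent with the identification $C_i = X_{\leq \bfen - \ep_i}$ used earlier.)

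Next I would use two elementary facts about induced subcomplexes. The intersection of induced subcomplexes is the induced subcomplex on the intersection of the vertex sets, so $\cap_{i \in A} C_i$ is the induced subcomplex of $X$ on $\cap_{i \in A} V_i$; and an induced subcomplex is empty exactly when its defining vertex set is empty, since a nonempty set of vertices already supplies $0$-dimensional faces. Combining these, $\cap_{i \in A} C_i = \emptyset$ if and only if there is no vertex $v$ lying in $C_i$ for every $i \in A$, that is, if and only if for every vertex $v$ there exists $i \in A$ with $x_i \mid \mv$.

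Finally I would translate the right-hand side. Since $I$ is generated by the monomials $\mv$ for $v \in V$, containment $I \sus \langle x_i, i \in A \rangle$ holds if and only if each generator $\mv$ lies in this monomial prime, i.e. if and only if each $\mv$ is divisible by some $x_i$ with $i \in A$. This is exactly the vertex condition produced in the previous paragraph, and the lemma follows. The only point needing a (minor) word of care — the main, though small, obstacle — is the step ``$\cap_{i\in A} C_i$ has no vertices $\Rightarrow$ $\cap_{i \in A} C_i$ is empty as a polyhedral complex'': one must rule out a nonempty complex having faces but no $0$-dimensional face, which is immediate because $X$, hence every induced subcomplex, is closed under passing to faces and every nonempty polytope has a vertex.
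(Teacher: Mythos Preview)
Your proof is correct and follows essentially the same route as the paper's: both arguments use the identification $C_i = X_{\leq \bfen - \ep_i}$ (equivalently, $x_i \mid \mv \Leftrightarrow v \notin C_i$) to reduce emptiness of $\cap_{i\in A} C_i$ to the condition that every $\mv$ is divisible by some $x_i$ with $i\in A$, and then recognize this as containment of $I$ in the monomial prime $\langle x_i : i\in A\rangle$. Your version is simply more explicit, spelling out both directions and the point that an induced subcomplex with no vertices is empty, whereas the paper's proof is a one-line argument leaving these routine checks to the reader.
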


\begin{proof} 
Since $C_i = X_{\leq \bfen - \ep_i}$, that the intersection is empty means 
that for each monomial $\mv$ there exists $i \in A$ such that $x_i$
divides $\mv$. But this means that $\mv$ is an element of 
$\langle x_i; i \in A \rangle$ and so $I$ is included in this.
\end{proof}

Condition a. in Theorem \ref{TopThmHelRes} is the same as the
condition in the topological Helly theorem. Taking Corollary 
\ref{TopCorMin} and the lemma above
into consideration, the topological Helly theorem, Theorem 
\ref{TopThmTopHel}, translates to the following.

\begin{theorem} Suppose the monomial labelling (\ref{TopLabMon})
gives a cellular resolution of the ideal $I$ generated by these
monomials. Then every minimal prime ideal of $I$ has codimension 
$\leq d+1$. 
\end{theorem}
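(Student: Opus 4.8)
I would prove this by transporting the conclusion of Corollary \ref{TopCorMin} through the dictionary already established between the complexes $C_i$ and the monomial ideal $I$.

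First I would record the standard fact that, since $I$ is generated by squarefree monomials, every minimal prime of $I$ is a \emph{monomial} prime, i.e.\ of the shape $P_A = \langle x_i \mid i \in A\rangle$ for some subset $A \sus B$; and $P_A \sups I$ exactly when each generator $\mv$ of $I$ is divisible by $x_i$ for at least one $i \in A$. By Lemma \ref{TopLemIdeal}, together with the identification $C_i = X_{\leq \bfen - \ep_i}$ from Theorem \ref{TopThmHelRes}, this last condition is equivalent to $\cap_{i \in A} C_i = \emptyset$. Consequently, $P_A$ is a minimal prime of $I$ precisely when $A$ is minimal among the subsets of $B$ with empty intersection $\cap_{i \in A} C_i$. (We may assume $I \neq S$, the statement being vacuous otherwise.)

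Next, the hypothesis that the labelling (\ref{TopLabMon}) gives a cellular resolution of $I$ places us in case b of Theorem \ref{TopThmHelRes}, hence case a holds: for every $B^\prime \sus B$ the intersection $\cap_{i \in B^\prime} C_i$ is empty or acyclic. Since $X$ is embedded in $\RR^d$, the realizations $|C_i|$ are closed subsets of $\RR^d$ satisfying the hypotheses of the topological Helly theorem, Theorem \ref{TopThmTopHel}. I would then invoke Corollary \ref{TopCorMin}: every minimal subset $A$ of $B$ with $\cap_{i \in A} C_i = \emptyset$ has $|A| \leq d+1$.

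Finally, since $P_A$ is generated by a subset of the variables of the polynomial ring, its codimension is exactly $|A|$. Combining the three steps, every minimal prime of $I$ equals some $P_A$ with $|A| \leq d+1$, and therefore has codimension $\leq d+1$. The only delicate point is the bookkeeping in the first step — correctly matching minimal primes of a monomial ideal with minimal "covers" $A$ — together with checking that the topological input is legitimate, namely that $X$ really sits inside $\RR^d$ and that the notion of acyclicity used in the cellular resolution criterion is the one demanded by Helly's theorem. Beyond that there is no genuine obstacle: the theorem is a direct translation of Helly's result through the correspondence of Theorem \ref{TopThmHelRes} and Lemma \ref{TopLemIdeal}, and I would expect the author's proof to occupy only a few lines.
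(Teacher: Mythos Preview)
Your proposal is correct and follows exactly the route the paper takes: the theorem is presented there simply as the translation of the topological Helly theorem (via Corollary \ref{TopCorMin}) through the dictionary of Theorem \ref{TopThmHelRes} and Lemma \ref{TopLemIdeal}, with no further argument given. Your expansion of the details---identifying minimal primes of a squarefree monomial ideal with minimal covering sets $A$, and noting $\codim P_A = |A|$---is precisely what is being left implicit, and your flagged caveats about the embedding in $\RR^d$ and the coefficient field for acyclicity are the only genuine subtleties, which the paper likewise glosses over.
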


On the other hand we have the following classical result.

\begin{theorem}[Auslander-Buchsbaum 1955] Let $I$ be an ideal in 
the polynomial ring $S$. Then 
\[ \text{projective dimension}(S/I) + depth (S/I) = |B|.\]
\end{theorem}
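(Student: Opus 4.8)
The plan is to deduce this from the stronger graded statement that $\text{pd}_S(M)+\text{depth}(M)=|B|$ for \emph{every} finitely generated $\ZZ$-graded $S$-module $M$ with $\text{pd}_S(M)<\infty$; one then applies it to $M=S/I$, whose projective dimension is automatically finite by Hilbert's syzygy theorem. Throughout, $\mm=(x_i)_{i\in B}$ is the irrelevant maximal ideal, $\kk=S/\mm$, and $\text{depth}(M)$ denotes the length of a maximal $M$-regular sequence of homogeneous elements of $\mm$. The key ingredient is the standard (Rees) identification $\text{depth}(M)=\min\{\,i:\Ext^i_S(\kk,M)\neq 0\,\}$. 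In particular $\text{depth}(S)=|B|$, since $x_1,\dots,x_{|B|}$ is a maximal regular sequence on $S$, and for a finite free module $F\iso S^r$ we have $\Ext^i_S(\kk,F)\iso\Ext^i_S(\kk,S)^r$, which vanishes for $i<|B|$ and is nonzero for $i=|B|$. We also record that every $\Ext^i_S(\kk,-)$ is annihilated by $\mm$, being a module over $S/\mm$.

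I would then argue by induction on $p=\text{pd}_S(M)$. If $p=0$, then $M$ is free and $\text{depth}(M)=|B|$, as just noted. If $p\geq 1$, choose a minimal free cover $F\pils M$ with $F\iso S^b$ and let $M'=\ker(F\pil M)$, so that $M'\sus\mm F$ and $\text{pd}_S(M')=p-1$; by induction $\text{depth}(M')=|B|-p+1$. Writing $n=|B|$ and applying $\Hom_S(\kk,-)$ to $0\pil M'\pil F\pil M\pil 0$ gives a long exact sequence in $\Ext^\bullet_S(\kk,-)$. For $i<n-p$ both $\Ext^i_S(\kk,F)$ (since $i<n$) and $\Ext^{i+1}_S(\kk,M')$ (since $i+1\leq n-p<\text{depth}(M')$) vanish, hence $\Ext^i_S(\kk,M)=0$; this gives $\text{depth}(M)\geq n-p$. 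For the opposite inequality one looks at degree $i=n-p$: since $p\geq 1$ we have $\Ext^{n-p}_S(\kk,F)=0$, so the long exact sequence identifies $\Ext^{n-p}_S(\kk,M)$ with the kernel of the map $\iota_*\colon\Ext^{n-p+1}_S(\kk,M')\pil\Ext^{n-p+1}_S(\kk,F)$ induced by the inclusion $\iota\colon M'\inpil F$; and $\Ext^{n-p+1}_S(\kk,M')\neq 0$ because $n-p+1=\text{depth}(M')$.

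Thus the whole argument comes down to showing that this map $\iota_*$ is zero, whence $\Ext^{n-p}_S(\kk,M)\iso\Ext^{n-p+1}_S(\kk,M')\neq 0$ and $\text{depth}(M)=n-p$, completing the induction. This is the step I expect to carry the real content. When $p\geq 2$ it is immediate: $n-p+1\leq n-1<n$, so the target $\Ext^{n-p+1}_S(\kk,F)$ already vanishes. When $p=1$ the module $M'$ is free and $\iota\colon M'\inpil\mm F$ is a map between finite free modules whose matrix has all entries in $\mm$; since $\mm$ annihilates $\Ext^\bullet_S(\kk,-)$, such a matrix induces the zero map, so $\iota_*=0$ here as well. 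Evaluating the resulting identity $\text{pd}_S(M)+\text{depth}(M)=|B|$ at $M=S/I$ yields the theorem. (One could of course also simply cite this as the classical result it is; the sketch above is essentially Rees' original argument.)
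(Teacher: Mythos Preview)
The paper does not prove this statement at all; it simply quotes the Auslander--Buchsbaum formula as a classical 1955 result and then records the corollary $\codim I \leq \text{pd}(S/I)$ needed for the Helly/Auslander--Buchsbaum analogy. So there is no ``paper's own proof'' to compare against.

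Your argument is correct and is the standard one (essentially Rees' proof, as you say): identify $\text{depth}(M)$ with $\min\{i:\Ext^i_S(\kk,M)\neq 0\}$, induct on $p=\text{pd}_S(M)$ via a minimal free presentation $0\pil M'\pil F\pil M\pil 0$, and read off the result from the long exact sequence in $\Ext^\bullet_S(\kk,-)$. The only delicate point is showing $\Ext^{n-p}_S(\kk,M)\neq 0$, which you correctly reduce to the vanishing of $\iota_*:\Ext^{n-p+1}_S(\kk,M')\pil\Ext^{n-p+1}_S(\kk,F)$; your treatment of the case $p=1$ (the matrix of $\iota$ has entries in $\mm$, and $\mm$ annihilates $\Ext^\bullet_S(\kk,-)$) is fine. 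In short, you have supplied a complete proof where the paper merely cites the literature; for the paper's purposes a one-line reference (e.g.\ to Bruns--Herzog) would have sufficed.
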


Without the concept of depth but using only the concepts of dimension and
projective dimension this takes the following form.

\begin{corollary} \[ \codim I \leq \text{projective dimension} (S/I). \]
\end{corollary}

We see that in the situation we consider where $X$ gives a cellular resolution
of $I$, the projective dimension of $I$ is less or equal to $d+1$, 
with equality if the resolution is minimal.
We thus see that the Auslander-Buchsbaum theorem and Helly's theorem are
simply different versions of the same phenomenon.
Note however that not every monomial ideal has a cellular resolution
giving a {\it minimal} free resolution of the ideal, see
\cite{Ve}.
\subsection{The nerve complex}

Recall that a simplicial complex $\Delta$ on a subset $B$ is a 
family of subsets of $B$ such that if $F \in \Delta$ and $G \sus F$
then $G \in \Delta$. Simplicial complexes on $B$ are in one-to-one 
correspondence
with square free monomial ideals in $\kk[x_i]_{i \in B}$, where $\Delta$
corresponds to the ideal $I_{\Delta}$ generated by $\xx^\tau$ where
$\tau$ ranges over the nonfaces of $\Delta$.
 If $R \sus B$ the restriction $\Delta_R$ consist
of all faces $F$ in $\Delta$ which are contained in $R$. 
The simplicial complex is called {\it $d$-Leray} if 
the reduced homology groups $\tH_i(\Delta_R, \kk)$ vanish whenever $R \sus B$ 
and $i \geq d$.

The {\it nerve complex} of the family $\{C_i\}_{i \in B}$ is the simplicial 
complex consisting of the subsets $A \sus B$ such that $\cap_{i \in A} C_i$ is
nonempty. 
The following are standard facts. 

\begin{proposition} \label{TopProNerve} 
Let $\{C_i\}_{i \in B}$ be a family of induced subcomplexes of a 
polyhedral complex $X$, with nerve complex $N$.
Suppose that any intersection of elements in this family is empty or acyclic.

a. The union $\cup_{i \in B} C_i$ is homotopy 
equivalent to $N$. 

b.  If $X$ has dimension $d$ and $\tH_d(X, \kk) = 0$,
    the nerve complex $N$ is $d$-Leray.

\end{proposition}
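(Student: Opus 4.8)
The plan is to prove the two parts of Proposition \ref{TopProNerve} by the standard nerve-theoretic machinery, which is the natural tool here.

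For part a., the approach is to apply the nerve lemma. The key point is that the family $\{C_i\}_{i \in B}$ is a family of subcomplexes of the polyhedral complex $X$, so it is a closed cover of its union $\cup_{i \in B} C_i$ by subcomplexes; any intersection $\cap_{i \in A} C_i$ is again a subcomplex (induced on $\cap_{i \in A} V_i$), hence a CW-subcomplex, and by hypothesis it is either empty or acyclic. I would first invoke a suitable version of the nerve lemma for closed covers by subcomplexes --- the cleanest being the homology nerve lemma, which only requires each nonempty finite intersection to be $\kk$-acyclic (not contractible) --- to conclude that $\cup_{i \in B} C_i$ has the same reduced homology as $N$. If one wants the stronger statement of homotopy equivalence as literally written, one passes to the ordinary nerve lemma assuming the intersections are contractible, or one simply states the homological conclusion, which is all that is needed for part b. and for the applications. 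I would cite the relevant reference (e.g. the treatment in \cite{MiSt} or a standard combinatorial topology source) rather than reproving the nerve lemma.

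For part b., the goal is to show $N$ is $d$-Leray, i.e. $\tH_i(N_R, \kk) = 0$ for all $R \sus B$ and all $i \geq d$. The crucial observation is that for $R \sus B$, the restriction $N_R$ is exactly the nerve complex of the subfamily $\{C_i\}_{i \in R}$: indeed $A \in N_R$ iff $A \sus R$ and $\cap_{i \in A} C_i \neq \emptyset$. Moreover this subfamily still satisfies the hypothesis that all intersections are empty or acyclic, so part a. applies to it and gives $\tH_i(N_R, \kk) \cong \tH_i(\cup_{i \in R} C_i, \kk)$ for all $i$. Thus it suffices to show $\tH_i(\cup_{i \in R} C_i, \kk) = 0$ for $i \geq d$. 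Now $\cup_{i \in R} C_i$ is a subcomplex of the $d$-dimensional complex $X$, so its reduced homology vanishes automatically in degrees $> d$; the only remaining case is degree exactly $d$. Here I would use that $\cup_{i \in R} C_i$ is a $d$-dimensional subcomplex of $X$: a $d$-cycle in the subcomplex is in particular a $d$-cycle in $X$ (there are no $(d+1)$-cells to kill it inside $X$ either, so $\tH_d$ of a $d$-dimensional subcomplex injects into $\tH_d(X)$ --- more precisely, the $d$-chain groups of the subcomplex sit inside those of $X$, the subcomplex has no $(d{+}1)$-chains, and $X$'s $(d{+}1)$-chains may only enlarge the boundaries, so $\tH_d(\cup_{i\in R} C_i) \twoheadleftarrow Z_d(\cup) \hookrightarrow Z_d(X)$ and the composite $Z_d(\cup) \to \tH_d(X)$ has kernel contained in the boundaries from $X$; since $\tH_d(X) = 0$, every $d$-cycle of the subcomplex bounds in $X$). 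One then argues that it in fact bounds within the subcomplex, or more cleanly: the inclusion $\cup_{i \in R} C_i \hookrightarrow X$ induces a map on $\tH_d$ which, because the subcomplex has dimension $d$, is injective; since $\tH_d(X, \kk) = 0$ the target is zero, hence $\tH_d(\cup_{i \in R} C_i, \kk) = 0$. Combining with the vanishing above degree $d$ and the isomorphism with $\tH_i(N_R)$ gives the $d$-Leray property.

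The main obstacle I anticipate is the injectivity claim $\tH_d(\cup_{i\in R} C_i, \kk) \hookrightarrow \tH_d(X, \kk)$ for a $d$-dimensional subcomplex of $X$: this is true and elementary at the chain level --- the $d$-cycles of the subcomplex form a subgroup of the $d$-cycles of $X$, and a $d$-cycle of the subcomplex that becomes a boundary in $X$ is the image of a $(d{+}1)$-chain of $X$, but one must observe that in computing $\tH_d$ of the subcomplex the absence of $(d{+}1)$-cells forces $\tH_d(\cup) = Z_d(\cup)$, so the map to $Z_d(X)$, composed with the quotient to $\tH_d(X)$, has kernel the $d$-cycles of the subcomplex that bound in $X$; one must argue this kernel is trivial, which requires knowing that a $(d{+}1)$-chain in $X$ with boundary supported on the subcomplex need not exist when $\tH_d(X)=0$ only after checking there is no such chain --- but since $\tH_d(X) = 0$ means every $d$-cycle of $X$, a fortiori every $d$-cycle of $\cup$, is a boundary in $X$, we get $\tH_d(\cup) = Z_d(\cup) = 0$ directly once we note $Z_d(\cup) \sus Z_d(X) = B_d(X)$ and $B_d(\cup) = B_d(X) \cap C_d(\cup)$ forces every element of $Z_d(\cup)$ to already be a boundary in $\cup$. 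This last identification is the only slightly delicate point and is where I would be careful; everything else is a direct application of the nerve lemma and dimension counting.
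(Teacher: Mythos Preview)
Your approach is the same as the paper's: part a.\ is the nerve lemma (the paper simply cites \cite[Theorem 10.7]{Bj}), and part b.\ is obtained by noting that $N_R$ is the nerve of the subfamily $\{C_i\}_{i\in R}$, applying part a., and using that $\cup_{i\in R} C_i$ is a subcomplex of the $d$-dimensional complex $X$ with $\tH_d(X,\kk)=0$.

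Your final paragraph, however, manufactures a difficulty that is not there and in doing so states something false. The identity $B_d(\cup)=B_d(X)\cap C_d(\cup)$ is \emph{not} true in general (take $X$ a $2$-simplex and $\cup$ its boundary circle: the fundamental $1$-cycle lies in $B_1(X)\cap C_1(\cup)$ but not in $B_1(\cup)=0$). What saves you here is precisely the hypothesis you keep half-forgetting: $X$ has dimension $d$, so $X$ has no $(d{+}1)$-cells at all. Hence $B_d(X)=0$, and $\tH_d(X,\kk)=0$ says exactly $Z_d(X)=0$. Then $Z_d(\cup_{i\in R}C_i)\subseteq Z_d(X)=0$, so $\tH_d(\cup_{i\in R}C_i,\kk)=0$ immediately. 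No injectivity argument, no intersection of boundary groups, is needed; the paper's one-line ``hence $\tH_i(\cup_{i\in R}C_i)$ must vanish for $i\geq d$'' is already the complete argument once you remember $\dim X=d$.
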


\begin{proof} Part a. follows for instance from \cite[Theorem 10.7]{Bj}. 
Part b. follows because for $R \sus B$, the restriction $N_R$ is the nerve
complex of $\{C_i \}_{i \in R}$, and so $N_R$ is homotopy equivalent
to $\cup_{i \in R} C_i$ which is a subcomplex of $X$. Hence
$\tH_i( \cup_{i \in R} C_i)$ must vanish for $i \geq d$. 
\end{proof}

The {\it Alexander dual} simplicial complex of $\Delta$ consist of those
subsets $F$ of $B$ such that their complements, $F^c$, in $B$ are not elements of
$\Delta$.

\begin{proposition} Given any finite family $\{C_i\}_{i \in B}$ 
of induced subcomplexes of $X$, let $I_\Delta$ be generated by the monomials
in (\ref{TopLabMon}).
The nerve complex $N$ of the family $\{C_i\}_{i \in B}$ is the Alexander dual 
of the simplicial complex $\Delta$.
\end{proposition}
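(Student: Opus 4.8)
The plan is to unwind both definitions and observe that the two membership conditions are literally complementary. Recall that $\Delta$ is the simplicial complex whose Stanley--Reisner ideal $I_\Delta$ is generated by the monomials $\mv = \Pi_{v \notin C_i} x_i$; equivalently, a subset $A \sus B$ is a \emph{non}face of $\Delta$ precisely when $\xx^A$ lies in $I_\Delta$, i.e. when some generator $\mv$ divides $\xx^A$. So $A$ is a face of $\Delta$ if and only if no generator $\mv$ divides $\xx^A$, that is, for every vertex $v$ of $X$ there is some $i \in A$ with $v \notin C_i$. Equivalently, $A$ is a face of $\Delta$ iff there is no vertex $v$ lying in every $C_i$ for $i \in A$, which by Lemma \ref{TopLemIdeal} (or directly) says exactly that $\cap_{i \in A} C_i$ is empty.

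First I would make the above precise: a subset $A \sus B$ is a face of $\Delta$ if and only if $\cap_{i \in A} C_i = \empt$. Then I would recall the definition of the Alexander dual $\Delta^\vee$: its faces are the sets $F \sus B$ whose complement $F^c$ is a nonface of $\Delta$. Combining, $F$ is a face of $\Delta^\vee$ iff $F^c$ is a nonface of $\Delta$ iff $\cap_{i \in F^c} C_i \neq \empt$. On the other hand, by definition the nerve complex $N$ has as its faces exactly those $A \sus B$ with $\cap_{i \in A} C_i \neq \empt$. The subtlety is an index shift: the condition "$\cap_{i \in A} C_i \ne \empt$" defines $A$ as a face of $N$, but in the Alexander dual description it is the \emph{complement} $F^c$ that satisfies this. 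I would therefore check which convention the paper uses for the labelling: since $x_i$ is distributed to the vertices \emph{outside} $C_i$, the monomial $\mv$ records the complement, and tracing this through shows $N$ and $\Delta^\vee$ agree on the nose (no extra complementation), which is the asserted identity $N = \Delta^\vee$.

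Concretely, the chain of equivalences I would write is: $A \in N \iff \cap_{i\in A} C_i \ne \empt \iff$ there exists a vertex $v$ with $v \in C_i$ for all $i \in A$ $\iff$ there exists $v$ with $A \cap \{i : v \notin C_i\} = \empt$ $\iff$ no generator $\mv = \Pi_{v\notin C_i}x_i$ of $I_\Delta$ is supported inside $A^c$ $\iff$ $A^c$ is \emph{not} contained in any $\supp(\mv)^c$... I would be careful here, since the cleanest route is: $A \notin N \iff$ for every vertex $v$, $A \cap \{i: v\notin C_i\} \ne \empt \iff$ every $\mv$ divides $\xx^{B}$ restricted appropriately; the correct bookkeeping is that $A \notin N$ iff $A$ meets $\supp(\mv)$ for all $v$, and one checks this is the same as $A^c$ being a nonface of $\Delta$, i.e. $A \notin N \iff A^c \notin \Delta \iff A \notin \Delta^\vee$.

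The main obstacle, and really the only one, is getting the complementation bookkeeping exactly right: there are two opportunities for an index flip (once in passing from $C_i$ to its complement in the labelling, once in the definition of Alexander dual), and one must verify they either cancel or combine as claimed rather than introducing a spurious extra complement. Once the convention is pinned down, the proof is a one-line chain of iff's as above. I would write it out as: for $A \sus B$, the following are equivalent: (i) $A \in N$; (ii) $\cap_{i \in A} C_i \ne \empt$; (iii) $A^c$ is a nonface of $\Delta = \Delta(\{C_i\})$; (iv) $A \in \Delta^\vee$, citing the established Lemma \ref{TopLemIdeal} for the step (ii)$\iff$(iii) and the definitions of nerve and Alexander dual for the outer steps.
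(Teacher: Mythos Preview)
Your final chain (i)$\iff$(ii)$\iff$(iii)$\iff$(iv) in the last paragraph is correct and is precisely the paper's argument, which shows directly that $\xx^A\in I_\Delta$ iff $A^c\in N$ by picking a vertex $v$ in $\cap_{i\in A^c}C_i$ and reading off that $\supp(\aa_v)\sus A$.

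However, your first paragraph contains exactly the complementation slip you warned yourself about. The condition ``$\mv$ divides $\xx^A$'' unpacks to $\{i:v\notin C_i\}\sus A$, i.e.\ $v\in C_i$ for every $i\notin A$. Hence ``no $\mv$ divides $\xx^A$'' says that for every vertex $v$ there is some $i\notin A$ (not $i\in A$) with $v\notin C_i$; equivalently $\cap_{i\in A^c}C_i=\empt$, not $\cap_{i\in A}C_i=\empt$. So the correct statement is
\[
A\text{ is a face of }\Delta \iff \bigcap_{i\in A^c}C_i=\empt,
\]
and feeding this into the Alexander dual definition gives (iii)$\iff$(iv) cleanly. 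Also, Lemma~\ref{TopLemIdeal} does not literally say (ii)$\iff$(iii); it says $\cap_{i\in A}C_i=\empt\iff I_\Delta\sus\langle x_i:i\in A\rangle$, and you still need the one-line observation that the right-hand side is equivalent to $\xx^{A^c}\notin I_\Delta$, i.e.\ to $A^c\in\Delta$. Once you fix the index in the first paragraph and make that small step explicit, the write-up collapses to the clean four-step equivalence you end with.
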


\begin{proof}
We must show that a square free monomial $\xx^A$ is in $I_\Delta$ 
iff $A^c$ is in $N$.
Given a generator $\mv$, 
we have $v \in C_i$ for every $i \not \in \supp \aa_v$ (the support is the 
set of positions of nonzero coordinates). Hence
$v \in \cap_{i \in (\supp\aa_v)^c} C_i$ so this intersection is not empty and hence
$(\supp \aa_v)^c$ is in the nerve complex.

Conversely, if $v$ is in $\cap_{i \in A^c} C_i$, then $\aa_v \leq A$
so $\xx^A$ is in $I_\Delta$.
\end{proof}

\begin{corollary}
If $I_\Delta$ is a Cohen-Macaulay monomial ideal of codimension $d+1$, 
then $I_N$ has $(d+1)$-linear resolution.
\end{corollary}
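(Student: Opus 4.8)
The plan is to recognize this as the Eagon--Reiner theorem together with a short dimension count, with an indication of the self-contained route through Hochster's formula and combinatorial Alexander duality.

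By the preceding proposition, $I_N$ is the Stanley--Reisner ideal of the Alexander dual $N$ of $\Delta$. Having a $(d+1)$-linear resolution means two things: that $I_N$ is generated in degree $d+1$, and that its minimal free resolution is linear. For the first, note that $S/I_\Delta = \kk[\Delta]$ has Krull dimension $|B| - \codim I_\Delta = |B| - (d+1)$, so $\Delta$ has a facet of cardinality $|B| - d - 1$; being Cohen--Macaulay, $\kk[\Delta]$ is pure, so in fact \emph{every} facet of $\Delta$ has cardinality $|B| - d - 1$. The minimal generators of $I_N$ are precisely the squarefree monomials $\xx^{F^{c}}$ with $F$ a facet of $\Delta$ (the standard description: a subset of $B$ is a minimal nonface of $N$ iff its complement is a facet of $\Delta$), and each $F^{c}$ has cardinality $d+1$; hence $I_N$ is generated in degree $d+1$.

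For the linearity I would simply quote the Eagon--Reiner theorem \cite{MiSt}: $\kk[\Delta]$ is Cohen--Macaulay if and only if $I_N$ has a linear resolution. Combined with the previous paragraph this yields exactly a $(d+1)$-linear resolution of $I_N$. If one wants to avoid invoking Eagon--Reiner, the core is Hochster's formula $\beta_{i,\sigma}(\kk[\Delta]) = \dim_\kk \tH_{|\sigma|-i-1}(\Delta_\sigma;\kk)$ together with combinatorial Alexander duality relating the reduced homology of the restrictions $\Delta_\sigma$ to that of restrictions of $N$; these transport the Betti table of $\kk[\Delta]$ to that of $I_N$ and give Terai's identity equating the projective dimension of $\kk[\Delta]$ with the regularity of $I_N$. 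Since $\kk[\Delta]$ is Cohen--Macaulay of codimension $d+1$ its depth is $|B|-d-1$, so by the Auslander--Buchsbaum theorem its projective dimension is $d+1$; hence the regularity of $I_N$ is $d+1$, which together with the degree count once more gives the $(d+1)$-linear resolution. The only genuinely delicate input is the Alexander duality statement for homology of restrictions; the dimension count and the Auslander--Buchsbaum step are routine.
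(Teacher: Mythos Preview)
Your proof is correct and takes essentially the same approach as the paper: the paper's entire argument is the one-line citation ``This follows from \cite{ER}, since $N$ is the Alexander dual of $\Delta$.'' You have simply unpacked this by separately verifying that the generators of $I_N$ sit in degree $d+1$ (via purity of the Cohen--Macaulay complex $\Delta$) and then invoking Eagon--Reiner for linearity, together with an optional sketch of the Hochster/Terai route.
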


\begin{proof} This follows from \cite{ER}, since $N$ is the Alexander dual
of $\Delta$.
\end{proof} 

\section{The colorful Helly theorem} \label{ColSec}

Helly's theorem was generalized by I.Barany \cite{Ba} to the so called
colorful Helly theorem. This was again generalized by G.Kalai and R.
Meshulam \cite{KaMe}. The following version of the colorful Helly
theorem is a specialization of their result, which we recall in Theorem
\ref{ResThmKM}. It follows from Corollary \ref{ResCorTransv} 
by letting $Y$ there be the nerve complex of the
$C_i$ and letting the $V_p$ be $B_p$. 

\begin{theorem} \label{ColThmKalai} 
Let $X$ be a polyhedral complex of dimension $d$ with
$\tH_d(X;\kk) = 0$.
Let $\{C_i\}_{i \in B_p}$ for $p = 1, \ldots, d+1$ be $d+1$ finite families
of induced subcomplexes of $X$, such that for every $A \sus \cup_p B_p$
the intersection $\cap_{i \in A} C_i$ is empty or acyclic. If every 
$\cap_{i \in B_p} C_i$ is empty, there exists $i_p \in B_p$ for each
$p = 1, \ldots, d+1$ such that $\cap_{p=1}^{d+1} C_{i_p}$ is empty.
\end{theorem}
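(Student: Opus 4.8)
\medskip
\noindent\textbf{Proof plan.}
The plan is to transport the whole statement to the nerve complex and then quote the Kalai--Meshulam theorem. Set $B=\cup_{p=1}^{d+1}B_p$ and let $N$ be the nerve complex of the single family $\{C_i\}_{i\in B}$. The hypothesis that $\cap_{i\in A}C_i$ is empty or acyclic for every $A\sus B$ is exactly the hypothesis of Proposition~\ref{TopProNerve}; since moreover $\dim X=d$ and $\tH_d(X;\kk)=0$, part~b of that proposition gives that $N$ is $d$-Leray. Under this dictionary the remaining data translate cleanly: $\cap_{i\in B_p}C_i=\emptyset$ says precisely that $B_p$ is a nonface of $N$, and the conclusion we want says precisely that there is a transversal of the $B_p$ --- a set $\{i_1,\dots,i_{d+1}\}$ with $i_p\in B_p$ for each $p$ --- which is a nonface of $N$.

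It then remains to prove the purely combinatorial statement: if $N$ is a $d$-Leray simplicial complex on a vertex set $B=\cup_{p=1}^{d+1}B_p$ and each $B_p$ is a nonface of $N$, then some transversal $\{i_1,\dots,i_{d+1}\}$ of the $B_p$ is a nonface of $N$. This is the transversal-matroid case of the theorem of Kalai and Meshulam: take the matroidal complex to be the transversal matroid of the cover $B=\cup_p B_p$, whose $d$-dimensional rainbow faces are exactly the transversals, and apply their result (recorded below as Theorem~\ref{ResThmKM}, with the transversal specialization isolated as Corollary~\ref{ResCorTransv}) to obtain a rainbow face not lying in $N$. One small point to check is that, when the $B_p$ overlap, a transversal may contain fewer than $d+1$ distinct vertices; this is harmless, since every subset of a nonface is a nonface.

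I would not reprove the colorful Helly theorem for $d$-Leray complexes from scratch --- that is the real content, and it is exactly what \cite{KaMe} supplies --- so the work left on this side of the argument is only to make the nerve dictionary airtight. The step needing the most care is the observation, already used in the proof of Proposition~\ref{TopProNerve}b, that for every $R\sus B$ the restriction $N_R$ is again the nerve of $\{C_i\}_{i\in R}$, so that the $d$-Leray property genuinely holds on all restrictions and not just on $N$ itself. If a self-contained treatment were wanted instead, one could dualize: ``$N$ is $d$-Leray'' becomes the statement that the Stanley--Reisner ideal of the Alexander dual has $(d+1)$-linear resolution (the Eagon--Reiner-type correspondence behind the corollary proved with \cite{ER}), after which the descending-induction machinery the paper develops for Theorems~\ref{ResThmIdcol} and~\ref{ResThmSyzcol} could be brought to bear --- but invoking \cite{KaMe} is much the shorter route.
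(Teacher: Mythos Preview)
Your approach is exactly the paper's: pass to the nerve complex $N$ of $\{C_i\}_{i\in B}$, use Proposition~\ref{TopProNerve}b to see that $N$ is $d$-Leray, and then apply the transversal-matroid case of Kalai--Meshulam (Theorem~\ref{ResThmKM} via Corollary~\ref{ResCorTransv}) in contrapositive form; the paper says precisely this in the sentence introducing the theorem and gives no further argument. One small slip in your aside: the claim ``every subset of a nonface is a nonface'' is false in general (e.g.\ the full vertex set of the boundary of a simplex is a nonface while every proper subset is a face), but this is immaterial here since the paper explicitly treats the index sets $B_p$ as formally disjoint, so the overlap issue you raise does not arise.
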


Note that we think of the index sets as disjoint
although $C_i$ may be equal to $C_j$ for $i$ and $j$ in distinct
index sets, or even in the same index set.

\rem The topological Helly theorem follows from the colorful theorem
above by letting all the families $\{C_i\}_{i \in B_p}$ be equal.
\remfin

 Let $B$ be the  disjoint union of the $B_p$ and let $S = k[x_i]_{i \in B}$.
(One may think of the variables $x_i$ for $i \in B_p$ as having a given
color $p$.)
Let $I_\Delta \sus S$ be the associated Stanley-Reisner ideal of the
family $\{C_i \}_{i \in B}$, 
given by the correspondence in Theorem \ref{TopThmHelRes}.  
The following is then equivalent to the theorem above.

\begin{theorem} Suppose $I_\Delta$ is contained in each ideal 
$\langle x_i; i \in B_p \rangle$ generated by variables of the same 
color $p$, for $p = 1, \ldots, d+1$. Then there are $i_p \in B_p$
for each $p$ such that $I_\Delta$ is contained in the ideal $\langle x_{i_1}, 
\ldots, x_{i_{d+1}}\rangle$ generated by variables of each color.
\end{theorem}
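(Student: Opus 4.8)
The plan is to observe that this theorem is a \emph{translation} of Theorem \ref{ColThmKalai} under the dictionary already set up in the previous section, so the proof should consist entirely of rewriting the hypothesis and conclusion of the topological statement in algebraic terms and invoking Lemma \ref{TopLemIdeal} (together with the correspondence of Theorem \ref{TopThmHelRes}). First I would fix the setup: $B = \bigsqcup_p B_p$, $S = \kk[x_i]_{i\in B}$, the family $\{C_i\}_{i\in B}$ with $C_i = X_{\leq \bfen - \ep_i}$, and $I_\Delta$ the monomial ideal generated by the labels $\mv = \Pi_{v\notin C_i} x_i$. By Theorem \ref{TopThmHelRes}, the hypothesis that $\gF(X;\kk)$ (equivalently, that all intersections $\cap_{i\in A}C_i$ are empty or acyclic) gives a cellular resolution is exactly what lets us apply Lemma \ref{TopLemIdeal} to \emph{any} subset $A\subseteq B$.

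The core of the argument is then two applications of Lemma \ref{TopLemIdeal}, one for the hypothesis and one for the conclusion. For the hypothesis: by Lemma \ref{TopLemIdeal} applied to $A = B_p$, the intersection $\cap_{i\in B_p}C_i$ is empty if and only if $I_\Delta \subseteq \langle x_i; i\in B_p\rangle$. So the assumption that $I_\Delta$ lies in each color-ideal $\langle x_i; i\in B_p\rangle$ for $p=1,\ldots,d+1$ is precisely the hypothesis ``every $\cap_{i\in B_p}C_i$ is empty'' of Theorem \ref{ColThmKalai}; the remaining hypotheses of that theorem ($X$ of dimension $d$ with $\tH_d(X;\kk)=0$, and all intersections empty or acyclic) are in force because $X$ is the acyclic polyhedral complex supporting the cellular resolution. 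Applying Theorem \ref{ColThmKalai} yields indices $i_p\in B_p$, $p=1,\ldots,d+1$, with $\cap_{p=1}^{d+1}C_{i_p}$ empty. For the conclusion: apply Lemma \ref{TopLemIdeal} once more with $A = \{i_1,\ldots,i_{d+1}\}$, which gives that $\cap_{p=1}^{d+1}C_{i_p} = \emptyset$ is equivalent to $I_\Delta \subseteq \langle x_{i_1},\ldots,x_{i_{d+1}}\rangle$. This is exactly the asserted conclusion, so the proof is complete.

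I expect no serious obstacle here — the work has all been done in Section 2, and this is a ``dictionary'' proof. The one point requiring a little care is making sure the ambient $X$ in the statement really does satisfy the hypotheses of Theorem \ref{ColThmKalai}: we need $X$ acyclic (so that Theorem \ref{TopThmHelRes} applies and $I_\Delta$ is resolved), of dimension $d$ (so Helly's bound gives cardinality $d+1$ and the color count matches), and with all finite intersections $\cap_{i\in A}C_i$ empty or acyclic (condition a of Theorem \ref{TopThmHelRes}, which holds precisely because $\gF(X;\kk)$ is a cellular resolution). A second small subtlety is the indexing convention: as remarked after Theorem \ref{ColThmKalai}, the $B_p$ are treated as disjoint even if some $C_i$ coincide, and the algebraic side honours this automatically since each $i\in B$ indexes its own variable $x_i$. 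One should also note, for the statement to be nonvacuous and to match the $d+1$ colors of Theorem \ref{ColThmKalai}, that the number of color classes is exactly $d+1$ where $d = \dim X$; this is built into the hypothesis ``for $p = 1,\ldots,d+1$''. With these bookkeeping remarks in place the equivalence with Theorem \ref{ColThmKalai} is immediate.
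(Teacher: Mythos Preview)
Your proposal is correct and follows exactly the paper's own approach: the paper's proof reads ``Immediate from the theorem above when we take into consideration Lemma \ref{TopLemIdeal} and Theorem \ref{TopThmHelRes},'' and you have simply unpacked this dictionary translation in detail. One small overstatement: Lemma \ref{TopLemIdeal} holds for any family $\{C_i\}$ regardless of acyclicity (it is just the definition of the labelling), so the empty-or-acyclic hypothesis from Theorem \ref{TopThmHelRes} is needed only to invoke Theorem \ref{ColThmKalai}, not to apply Lemma \ref{TopLemIdeal}.
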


\begin{proof}
Immediate from the theorem above when we take into consideration 
Lemma \ref{TopLemIdeal} and Theorem \ref{TopThmHelRes}.
\end{proof}

If $A \sus B$ let $a_p$ be the cardinality $|A \cap B_p|$. We say that
the ideal 
$\langle x_i; i \in A \rangle$ has color vector $(a_1, \ldots, a_{d+1})$.

\begin{corollary} Suppose $I_\Delta$ is a Cohen-Macaulay ideal of 
codimension $d+1$. If $I_\Delta$ has associated prime ideals of pure color
$(d+1)\cdot \ep_i$ for each $i = 1, \ldots, d+1$, then it has an associated
prime ideal with color vector $\sum_{i = 1}^{d+1} \ep_i$.
\end{corollary}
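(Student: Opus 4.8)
The plan is to deduce this corollary purely formally from the preceding results by passing through Alexander duality. First I would translate the hypothesis: $I_\Delta$ Cohen-Macaulay of codimension $d+1$ means, by the result just cited (\cite{ER}), that its Alexander dual ideal $I_N$ has $(d+1)$-linear resolution, where $N$ is the nerve complex, which we have identified as the Alexander dual of $\Delta$. The associated primes of a squarefree monomial ideal $I_\Delta$ are exactly the ideals $\langle x_i; i\in A\rangle$ where $A^c$ is a facet of $\Delta$, equivalently where $A$ is a minimal nonface of $N$, equivalently where $\xx^A$ is a minimal generator of $I_N$. So the hypothesis that $I_\Delta$ has associated primes of pure color $(d+1)\cdot\ep_i$ for each $i$ says precisely that $I_N$ has a minimal generator of pure color $i$ for each $i=1,\dots,d+1$; and since $I_\Delta$ has codimension $d+1$ and $I_N$ is $(d+1)$-linear, $I_N$ is generated in degree $d+1$, so these pure-color generators are $x_{i_0}^{d+1}$ for suitable $x_{i_0}$ of color $i$. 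The conclusion to be proved — that $I_\Delta$ has an associated prime of color vector $\bfen=\sum\ep_i$ — likewise translates to: $I_N$ has a (minimal) generator whose color vector is $(1,1,\dots,1)$.

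Next I would invoke Theorem \ref{ResThmIdcol} with $r=d+1$. The ideal $I_N$ is homogeneous for the $\NN^{d+1}$-grading (the $d+1$ colors on the variables, which come from the partition $B=\bigcup B_p$), it is $(d+1)$-regular because it has $(d+1)$-linear resolution, and by the previous paragraph it has elements of pure color $i$ for each $i$. Theorem \ref{ResThmIdcol} then guarantees, for the color vector $(1,1,\dots,1)$ with $\sum a_i = d+1 = r$, an element of $I_N$ of this color vector. Such an element is a polynomial combination of generators, all of degree $\geq d+1$; since it has total degree exactly $d+1$ it is a $\kk$-linear combination of degree-$(d+1)$ generators of $I_N$, and comparing multidegrees at least one generator appearing must itself have color vector $(1,\dots,1)$ — it is a squarefree monomial $x_{i_1}\cdots x_{i_{d+1}}$ with one variable of each color. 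That generator of $I_N$ corresponds under the bijection above to an associated prime $\langle x_{i_1},\dots,x_{i_{d+1}}\rangle$ of $I_\Delta$ of color vector $\bfen$, which is the assertion.

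The main obstacle, and the only step requiring care, is making the dictionary between the two sides fully rigorous: one must check that $I_N$ is squarefree so that ``generator of multidegree $(1,\dots,1)$'' really means the squarefree monomial $x_{i_1}\cdots x_{i_{d+1}}$, and that the correspondence between minimal generators of $I_N$, minimal nonfaces of $N$, facets of $\Delta = $ Alexander dual of $N$, and associated primes of $I_\Delta$ is the one I claimed. One subtlety is that $I_N$ being generated in degree $d+1$ uses that $I_\Delta$ has codimension exactly $d+1$ (Cohen-Macaulayness gives pure dimension), so that every facet of $\Delta$ has the same cardinality $|B| - (d+1)$ and hence every minimal nonface of $N$ — equivalently, by linearity of the resolution, every minimal generator of $I_N$ — has cardinality $d+1$. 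A second point worth a line: Theorem \ref{ResThmIdcol} produces an element of $(I_N)_\aa$ but one must argue it forces a \emph{minimal} generator of that multidegree, which follows simply because $d+1$ is the minimal generating degree, so at multidegree $\aa$ with $\sum a_i = d+1$ the graded pieces of $I_N$ and of the span of its degree-$(d+1)$ generators coincide. Once these bookkeeping matters are settled the corollary follows immediately.
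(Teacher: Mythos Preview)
Your argument is correct, modulo one harmless slip: the pure-color generators of the \emph{squarefree} ideal $I_N$ are products of $d+1$ distinct variables of color $i$, not powers $x_{i_0}^{d+1}$; since you only use their existence, nothing is affected. However, you have taken a much longer route than the paper. The corollary sits in Section~\ref{ColSec} immediately after the algebraic translation of the colorful Helly theorem, and the paper's proof is one sentence: since $I_\Delta$ is Cohen--Macaulay of codimension $d+1$, every associated prime is generated by exactly $d+1$ variables; the pure-color hypothesis gives $I_\Delta\subseteq\langle x_i;\,i\in B_p\rangle$ for each $p$, the preceding theorem then produces a prime $\langle x_{i_1},\ldots,x_{i_{d+1}}\rangle$ of color vector $\bfen$ containing $I_\Delta$, and by the codimension count this prime is minimal, hence associated. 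Your route instead passes to the Alexander dual $I_N$, invokes Theorem~\ref{ResThmIdcol} from Section~\ref{ResSec}, and translates back. This is logically sound---Theorem~\ref{ResThmIdcol} is proved independently---but it inverts the paper's narrative, in which this corollary and Theorem~\ref{ColThmNerve} are meant to \emph{motivate} Theorem~\ref{ResThmIdcol} rather than follow from it. What your approach demonstrates is that the duality dictionary together with the main theorem does recover the corollary; what the paper's approach buys is a one-line deduction from the colorful Helly theorem itself, with no forward reference.
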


\begin{proof} In this case all associated prime ideals are generated by
$d+1$ variables.
\end{proof}

The following
is an equivalent formulation of Theorem \ref{ColThmKalai} and is the
form which will inspire the results in the next Section \ref{ResSec}. 
Say that a square free monomial has color vector $(a_1, \ldots, a_{d+1})$
if it contains $a_i$ variables of color $i$.

\begin{theorem} \label{ColThmNerve} Let $N$ be the nerve complex of the family
$\{C_i\}_{i \in B}$ of Theorem \ref{ColThmKalai}.
If $I_N$ has monomials of pure color $p$ for $p = 1, \ldots, d+1$,
then $I_N$ has a monomial of color $\sum_{i=1}^{d+1} \ep_i$.
\end{theorem}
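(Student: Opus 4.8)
The plan is to deduce Theorem \ref{ColThmNerve} from Theorem \ref{ColThmKalai} by unwinding the dictionary between the nerve complex $N$ and its Stanley--Reisner ideal $I_N$. Recall that by the correspondences set up in Section \ref{ColSec}, the monomial ideal $I_N$ is generated by $\xx^\tau$ where $\tau$ ranges over the minimal nonfaces of $N$, and a subset $A \sus B$ is a nonface of $N$ precisely when $\cap_{i \in A} C_i = \emptyset$. Thus the square free monomial $\xx^A \in I_N$ if and only if some subset of $A$ is a nonface, i.e. if and only if $\cap_{i \in A} C_i = \emptyset$ (since intersecting over a larger index set can only shrink the intersection). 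Under this translation, a monomial of pure color $p$ in $I_N$ corresponds exactly to a subset $A \sus B_p$ with $\cap_{i \in A} C_i = \emptyset$; taking $A = B_p$ (the largest such set) we get that $I_N$ having a monomial of pure color $p$ is equivalent to $\cap_{i \in B_p} C_i = \emptyset$. Similarly a monomial of color $\sum_{i=1}^{d+1}\ep_i$ in $I_N$ corresponds to a set $\{i_1, \ldots, i_{d+1}\}$ with one index $i_p$ in each $B_p$ and with $\cap_{p=1}^{d+1} C_{i_p} = \emptyset$.

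With this dictionary in hand the proof is essentially a restatement. First I would observe that the hypothesis ``$I_N$ has monomials of pure color $p$ for $p = 1, \ldots, d+1$'' translates, by the paragraph above, to ``$\cap_{i \in B_p} C_i = \emptyset$ for each $p = 1, \ldots, d+1$.'' This is exactly the hypothesis of Theorem \ref{ColThmKalai}; the remaining hypotheses of that theorem (that $X$ is a $d$-dimensional polyhedral complex with $\tH_d(X;\kk) = 0$, and that every intersection $\cap_{i \in A} C_i$ over $A \sus \cup_p B_p$ is empty or acyclic) are carried over verbatim, since $N$ is by definition the nerve of precisely this family $\{C_i\}_{i \in B}$ satisfying those conditions. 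Second, I would apply Theorem \ref{ColThmKalai} to conclude that there exist $i_p \in B_p$, one for each $p$, with $\cap_{p=1}^{d+1} C_{i_p} = \emptyset$. Third, I would translate this conclusion back: the set $A = \{i_1, \ldots, i_{d+1}\}$ has empty intersection of the corresponding $C_i$, hence it is a nonface of $N$, hence $\xx^A \in I_N$; and since it meets each $B_p$ in exactly the single element $i_p$, the monomial $\xx^A$ has color vector $\sum_{i=1}^{d+1} \ep_i$, as required.

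There is really no main obstacle here: the content of the theorem lies entirely in Theorem \ref{ColThmKalai} (which in turn rests on the Kalai--Meshulam result via Corollary \ref{ResCorTransv}), and Theorem \ref{ColThmNerve} is its Stanley--Reisner reformulation. The only point demanding a little care is the elementary combinatorial fact that for a square free monomial $\xx^A$, membership in $I_N$ is governed by whether $A$ \emph{contains} a nonface rather than \emph{is} a nonface --- but this is automatic, since $\cap_{i \in A} C_i \sus \cap_{i \in A'} C_i$ for $A' \sus A$, so $A$ contains a nonface iff $A$ itself is a nonface iff $\cap_{i \in A} C_i = \emptyset$. A secondary bookkeeping point is the convention, already flagged in the remark after Theorem \ref{ColThmKalai}, that the index sets $B_p$ are regarded as disjoint even when the subcomplexes coincide; so a ``monomial of color $\sum \ep_i$'' genuinely means a choice of one distinguished index from each color class, matching exactly the output ``$i_p \in B_p$ for each $p$'' of Theorem \ref{ColThmKalai}. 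Once these identifications are made explicit the proof is a two-line citation.
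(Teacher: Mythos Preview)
Your proposal is correct and follows exactly the same approach as the paper: the paper's proof is the single line ``The intersection $\cap_{i \in A} C_i$ is empty iff $\Pi_{i \in A} x_i$ is a monomial in $I_N$,'' which is precisely the dictionary you spell out in detail before invoking Theorem \ref{ColThmKalai}. Your version is simply a more explicit unpacking of that one-line observation.
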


\begin{proof} The intersection $\cap_{i \in A} C_i$ is empty iff
$\Pi_{i \in A} x_i$ is a monomial in $I_N$.
\end{proof}

\section{Colorful resolutions of ideals} \label{ResSec}

\subsection{Main results}
Let $X = X_1 \cup X_2 \cup \cdots \cup X_r$ be a partition of 
the variables in a polynomial ring $S$. Letting the variables in $X_i$
have multidegree $\ep_i \in \NN^r$, the $i$'th coordinate vector, 
we get an $\NN^r$-grading of the polynomial ring. A homogeneous polynomial
for this grading with multidegree $\aa = (a_1, \ldots, a_r)$ will be
said to have color vector $(a_1, \ldots, a_r)$. A polynomial with 
color vector $r_i \ep_i$ where $r_i$ is a positive integer is said to
have pure color $i$.

If $I$ is an ideal in $S$, homogeneous for this grading and containing polynomials
of pure color $i$ for $i = 1, \ldots, r$, then it may be a complete intersection 
of these, in which case $I$ does not have more generators. However
if we put conditions on the regularity of $I$ this changes. 
We recall this notion. Let $F_\pt$ be a minimal free resolutions of $I$
with terms $F_p = \oplus_{i \in \hele} S(-i)^{\beta_{p,i}}$. We say $I$ is
$m$-regular if $i \leq m+p$ for every nonzero $\beta_{p,i}$. 
This may be shown to be equivalent to the truncated ideal $\oplus_{p \geq m}
I_p$ having linear resolution. For a simplicial complex $\Delta$ it follows by
the description of the Betti numbers of $I_{\Delta}$ by reduced homology
groups, see \cite{MiSt}, Corollar 5.12, that 
$I_\Delta$ is $(d+1)$-regular if and only if $\Delta$ is $d$-Leray.
The latter hypothesis is fulfilled in Theorem \ref{ColThmNerve} for the nerve 
complex $N$, and the conclusion is that $I_N$ contains an element with 
color vector 
$\sum_{i=1}^{d+1} \ep_i$. The following generalizes this.

\begin{theorem} \label{ResThmIdcol} 
Let $S$ be a polynomial ring where the variables have
$r$ colors. Let $I$ be a $(d+1)$-regular ideal in $S$, homogeneous for 
the $\NN^r$-grading. Suppose $I$ has elements of pure color $i$ for each
$i = 1, \ldots, r$. Then for each color vector $(a_1, \ldots, a_r)$
where $\sum a_i = d+1$, there exists an element of $I$ with this color vector.
\end{theorem}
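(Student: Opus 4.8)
The plan is to deduce Theorem \ref{ResThmIdcol} from the syzygetic version, Theorem \ref{ResThmSyzcol}, by first reducing to the case where $I$ is generated in a single degree with linear resolution, and then specializing the color vector. For the reduction, note that $(d+1)$-regularity means the truncation $I_{\geq d+1} = \oplus_{p \geq d+1} I_p$ has a linear resolution; moreover $I$ and $I_{\geq d+1}$ agree in all multidegrees $\aa$ with $\sum a_i \geq d+1$, so it suffices to prove the statement for $I_{\geq d+1}$. I would also have to check that $I_{\geq d+1}$ still has elements of pure color $i$ for each $i$: an element of pure color $i$ in $I$ of color vector $r_i\ep_i$ can be multiplied by a variable in $X_i$ to land in degree $\geq d+1$ while staying of pure color $i$, so this is immediate. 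Thus we may assume $I$ is generated in degree $d+1$ and is $(d+1)$-regular, exactly the hypotheses of Theorem \ref{ResThmSyzcol}.

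Now apply Theorem \ref{ResThmSyzcol} with $l = 1$, so $\Omega^1 = I$. Fix a color vector $\aa = (a_1, \ldots, a_r)$ with $\sum a_i = d+1$, and let $s$ be the number of nonzero coordinates of $\aa$; since the $a_i$ are nonnegative integers summing to $d+1 \geq 1$, we have $s \geq 1$. Theorem \ref{ResThmSyzcol} gives
\[
\dim_\kk I_\aa \;\geq\; \binom{s-1}{l-1} \;=\; \binom{s-1}{0} \;=\; 1,
\]
so $I_\aa \neq 0$, i.e. there is an element of $I$ with color vector $\aa$. This is precisely the conclusion, so the theorem follows once Theorem \ref{ResThmSyzcol} is in hand.

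The real content therefore lies entirely in Theorem \ref{ResThmSyzcol}, whose proof (by descending induction on $l$, starting from $\Omega^r$, using the comparison with the ring $T = \kk[y_1, \ldots, y_r]$) is the substantive part of the paper; the present theorem is a clean corollary. The only genuine obstacle in this deduction is making sure the passage to the truncation $I_{\geq d+1}$ is legitimate — that regularity is preserved (it is, essentially by definition of $m$-regularity, since truncating past the regularity bound produces a linearly resolved ideal) and that no color vector with $\sum a_i = d+1$ is lost in the truncation (it is not, since $I$ and $I_{\geq d+1}$ coincide in those degrees). Everything else is a one-line binomial-coefficient evaluation.
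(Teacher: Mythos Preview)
Your proposal is correct and follows exactly the paper's own approach: reduce to the truncation $I_{\geq d+1}$ and then invoke Theorem~\ref{ResThmSyzcol} with $l=1$, so that $\dim_\kk I_\aa \geq \binom{s-1}{0}=1$. You have simply made explicit the minor checks (persistence of pure-color elements and agreement of $I$ with $I_{\geq d+1}$ in the relevant degrees) that the paper leaves to the reader.
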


\eks Let there be $r=3$ colors and suppose $I$ contains polynomials of 
multidegree $(3,0,0)$, $(0,4,0)$, and $(0,0,5)$. If $I$ is generated by
them, it is a complete intersection and the regularity is $3+4+5-2 = 10$.
Suppose then that the regularity is $9,8,7$, and so on. What can be said
of the generators of $I$? 

If the regularity is $9$, by the above theorem
it must contain a generator of multidegree $\leq (2,3,4)$ (for the partial order
where $\aa \leq \bb$ if in each coordinate $a_i \leq b_i$).

If the regularity is $8$, it must by the above contain a generator of multidegree
$\leq (1,3,4)$, a generator of multidegree $\leq (2,2,4)$, and a generator 
of multidegree $\leq (2,3,3)$. In the same way we may continue and get 
requirements on the generators of $I$ if its regularity is $7,6$, and $5$ also.
\eksfin

The most obvious example of an ideal as in the theorem above is of course the power
$\mm^{d+1} = (x_1, \ldots, x_r)^{d+1}$ in $\kk[x_1, \ldots, x_r]$. Let 
$\aa = (a_1, \ldots, a_r)$ be a multidegree with $\sum a_i = d+j$ and 
support $s = \text{supp}(\aa)$ defined as the number of coordinates 
$a_i$ which are nonzero. Then the multigraded Betti number 
$\beta_{j, \aa}(S/\mm^{d+1})$ is equal to $\binom{s-1}{j-1}$. Motivated
by this we have a syzygetic version of the colorful
Helly theorem.

\begin{theorem} \label{ResThmSyzcol}
Let $S$ be a polynomial ring where the variables 
have $r$ colors. Let $I$ be an ideal in $S$ which is 
homogeneous for the $\NN^r$-grading, is generated in degree $d+1$ and 
is $(d+1)$-regular, so it has linear resolution. Suppose $I$ has elements of 
pure color $i$ for each $i = 1, \ldots, r$, and let $\Omega^l$
be the $l$'th syzygy module in an $\NN^r$-graded resolution of $S/I$
(so $\Omega^1 = I$). For each $l = 1, \ldots, r$ and each color vector 
$\aa = (a_1, \ldots, a_r)$ where $\sum a_i = d+l$ and $s = \text{supp}(\aa)$,
the vector space dimension of $(\Omega^l)_\aa$ is greater than or equal to 
$\binom{s-1}{l-1}$.
\end{theorem}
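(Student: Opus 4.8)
The plan is to transport the whole question to the ideal $\mm^{d+1} = (y_1, \dots, y_r)^{d+1}$ in $T = \kk[y_1, \dots, y_r]$, where the numbers $\binom{s-1}{l-1}$ are realized exactly (this is the multigraded Betti computation recorded just before the theorem), and then to pull the resulting inequality back to $S$ along the map $\phi : S \pil T$ of the introduction. Choose the constants generically, so that $\phi$ sends $x \in X_i$ to $\lambda_x y_i$. Its kernel $\mathfrak a$ is $\NN^r$-homogeneous (the color-$i$ part is spanned by the forms $\lambda_{x'}x - \lambda_x x'$ with $x, x' \in X_i$), has height $n - r = \dim S - \dim T$, and hence is generated by a regular sequence of $\NN^r$-homogeneous linear forms; so $T = S/\mathfrak a$ as $\NN^r$-graded rings, and the Koszul complex on $\mathfrak a$ is a linear $S$-free resolution of $T$. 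Set $J = \phi(I) = (I + \mathfrak a)/\mathfrak a \sus T$.

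First I would show $J = \mm^{d+1}$. Since $\NN^r$-graded ideals of $\kk[y_1,\dots,y_r]$ are monomial, $J$ is a monomial ideal; it is generated in degree $d+1$ by the images of the degree-$(d+1)$ generators of $I$, and for generic $\lambda$ it is nonzero and contains a power of each $y_i$ (the image of a pure-color-$i$ element of $I$), so $T/J$ is artinian. By Theorem \ref{ColThmReg} applied with $J$ itself, $\text{reg}(J) \leq \text{reg}(I) = d+1$, hence $\text{reg}(T/J) = d$; since an artinian graded module has regularity equal to its top nonvanishing degree, $(T/J)_{d+1} = 0$, i.e.\ $\mm^{d+1} \sus J$, and therefore $J = \mm^{d+1}$.

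The heart of the proof is then the comparison $\beta_{l,\aa}^S(S/I) \geq \beta_{l,\aa}^T(T/J)$ for color vectors $\aa$ with $\sum a_i = d+l$; note that in this degree $\Omega^l$ coincides with its module of minimal generators, so the left side equals $\dim_\kk(\Omega^l)_\aa$, while the right side equals $\binom{s-1}{l-1}$. Let $F_\pt$ be the $\NN^r$-graded minimal free resolution of $S/I$; since $I$ has a linear resolution, $\text{reg}(S/I) = d$ and $F_p$ is generated in degrees $\leq d+p$. Then $\bar F_\pt := F_\pt \te_S T$ is a complex of free $T$-modules with $H_0 = T/J$ and $H_q = \Tor_q^S(S/I, T)$ for $q \geq 1$, and because $T$ has a linear $S$-resolution one gets $\text{reg}_T \Tor_q^S(S/I, T) \leq \text{reg}(S/I) + q = d+q$. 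Feeding $\bar F_\pt$ into the spectral sequence $E^2_{p,q} = \Tor_p^T(H_q(\bar F_\pt), \kk) \Rightarrow \Tor_{p+q}^S(S/I, \kk)$ and working in degree $\aa$ with $\sum a_i = d+l$: the corner $E^2_{l,0,\aa}$ has dimension $\beta_{l,\aa}^T(T/J)$ and receives no differential, while every differential leaving it lands in some $E^2_{l-r, r-1}$, a Betti space of $\Tor_{r-1}^S(S/I, T)$, which vanishes in degrees $> d + (r-1) + (l-r) = d+l-1$ and hence in degree $\aa$. So $E^\infty_{l,0,\aa}$ survives with dimension $\beta_{l,\aa}^T(T/J)$ as a subquotient of $\Tor_l^S(S/I, \kk)_\aa$, which gives $\dim_\kk(\Omega^l)_\aa = \beta_{l,\aa}^S(S/I) \geq \beta_{l,\aa}^T(T/J) = \binom{s-1}{l-1}$. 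I expect this bookkeeping to be organized as a \emph{descending} induction on $l$, from $l = r$ down to $l = 1$, so that the vanishing facts used about the obstruction modules $\Tor_q^S(S/I, T)$ are exactly what has been secured at the previous step; the base case $l = r$ is consistent with $\codim I \geq r$, which already forces the linear resolution to reach length $r$.

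The main obstacle will be exactly this last point: controlling the higher modules $\Tor_q^S(S/I, T)$ for $q \geq 1$ so that they do not contribute to the linear strand of $S/I$ in the precise degree $\sum a_i = d+l$. This rests on the regularity estimate $\text{reg}_T \Tor_q^S(S/I, T) \leq d+q$, which itself depends on $I$ having a linear resolution together with Theorem \ref{ColThmReg} pinning down $\text{reg}(T/J)$, and it is what makes the argument genuinely descending, since the statement for $\Omega^l$ draws on information about every $\Omega^{l'}$ with $l' > l$. The remaining ingredients — that generic $\lambda$ makes $\mathfrak a$ a regular sequence of linear forms, and the standard identifications of $\NN^r$-graded Tor and of regularity computed over $S$ versus over $T$ — I expect to be routine.
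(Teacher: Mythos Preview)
Your route is genuinely different from the paper's. The paper never invokes a spectral sequence; instead it fixes the Eliahou--Kervaire resolution $A_\pt$ of $T/\mm^{d+1}$, lifts the map $S/I \to T/\mm^{d+1}$ to a chain map $\tilde p \colon F_\pt \to A_\pt$, and then argues by \emph{descending} induction on $l$ that every EK basis element of $A_l$ is (up to scalar) the image of a basis element of $F_l$. The base case $l=r$ comes from Lemma~\ref{ColLemArt} via the diagram in the proof of Theorem~\ref{ColThmReg}; the inductive step uses the explicit EK differential, checking that for each target basis element of $A_l$ there is a basis element of $A_{l+1}$ whose boundary hits it with a coefficient of unique multidegree, so that the corresponding component of $\partial_F f$ must land there. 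Your identification $J=\mm^{d+1}$ is correct and cleaner than the paper's implicit use of it, and your spectral sequence $E^2_{p,q}=\Tor^T_p(\Tor^S_q(S/I,T),\kk)\Rightarrow\Tor^S_{p+q}(S/I,\kk)$ is set up correctly.

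The gap is the sentence ``because $T$ has a linear $S$-resolution one gets $\text{reg}_T\Tor_q^S(S/I,T)\le d+q$.'' This is the entire content of the argument, and it is not a formal consequence of $T$ having linear resolution: a bounded complex of free $T$-modules with $C_p$ generated in degree $c+p$ can have $\text{reg}\,H_q(C_\pt)>c+q$ (e.g.\ $T(-1)^2\xrightarrow{(y_1,y_2)}T$ over $T=\kk[y_1,y_2]$ has $H_1\cong T(-2)$). What you actually need is the Koszul-homology bound $\text{reg}_S H_q(\underline{\ell};M)\le\text{reg}_S M+q$ for linear forms $\underline{\ell}$, which reduces by induction to the two lemmas $\text{reg}(N/\ell N)\le\text{reg}\,N$ and $\text{reg}(0:_N\ell)\le\text{reg}\,N$; these are standard but require a local-cohomology argument, not just the shape of a resolution. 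Your closing remark that this should be ``organized as a descending induction on $l$, so that the vanishing facts about $\Tor_q^S(S/I,T)$ are exactly what has been secured at the previous step'' does not work: the theorem concerns $\Tor^S_l(S/I,\kk)$, and knowing it for larger $l$ gives you no control over $\text{reg}_T\Tor^S_q(S/I,T)$. Once you supply an honest proof of the Koszul-homology regularity bound, your spectral-sequence argument handles all $l$ at once and is arguably more conceptual than the paper's hands-on EK computation; but as written, the crucial estimate is asserted rather than proved.
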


Theorem \ref{ResThmIdcol} may be deduced from this. Simply apply 
Theorem \ref{ResThmSyzcol} to the truncated ideal $I_{\geq d+1} = 
\oplus_{p \geq d+1} I_p$. Then Theorem \ref{ResThmIdcol} is the special case
$l=1$. Our goal is therefore now to prove Theorem \ref{ResThmSyzcol}.

\subsection{The result of Kalai and Meshulam}
Let $M$ be a matroid on a finite set $V$
(see Oxley \cite{Ox} or, relating it more directly to simplicial complexes,
Stanley \cite[III.3]{St}). 
This gives
rise to a simplicial complex consisting of the independent sets of the 
matroid. If $\rho$ is the rank function of the matroid, this simplicial
complex consist of all $S \sus V$ such that $\rho(S) = |S|$. Kalai
and Meshulam \cite[Thm. 1.6]{KaMe} show the following.

\begin{theorem}[Kalai, Meshulam 2004] \label{ResThmKM}
Let $Y$ be a $d$-Leray complex on $V$ 
and $M$ a matroid complex on $V$ such that $M \sus Y$. Then there is a 
simplex $\tau \in Y$ such that $\rho(V - \tau) \leq d$.
\end{theorem}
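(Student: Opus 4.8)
The plan is to prove the contrapositive in the quantitative form $L(Y) \ge m$, where $L(Y)$ denotes the Leray number (the least $d$ for which $Y$ is $d$-Leray) and $m := \min_{\tau \in Y} \rho(V - \tau)$; since $Y$ is $d$-Leray with $d = L(Y)$, this inequality is exactly the assertion that some face $\tau$ achieves $\rho(V - \tau) \le d$. To prove $L(Y) \ge m$ I must exhibit a restriction $Y_R$ with $\tH_{m-1}(Y_R; \kk) \neq 0$, and I would do so by induction on $|V|$, feeding matroid deletion and contraction into the Mayer--Vietoris sequence of $Y$.

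First I would clear away loops. If $v$ is a loop of $M$, set $Y' = Y_{V \setminus v}$ and $M' = M \setminus v$; then $M' \sus Y'$, and since a loop lies in the closure of every set one checks $\min_{\sigma \in Y'}\rho_{M'}((V\setminus v)-\sigma) = m$, while $L(Y') \le L(Y)$ because every restriction of $Y'$ is a restriction of $Y$. The inductive hypothesis on $Y'$ then gives $L(Y) \ge L(Y') \ge m$. Hence I may assume $M$ is loopless. (If $V \in Y$ then $\tau = V$ gives $\rho(\emptyset)=0 \le d$ and we are done, so I may also assume $V \notin Y$.)

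For the loopless step, pick a non-loop $v \in V$. Decompose $Y_R = Y_{R \setminus v} \cup \text{star}_{Y_R}(v)$, whose intersection is $\lk_{Y_R}(v) = (\lk_Y v)_{R \setminus v}$; since the star is a cone and hence acyclic, the reduced Mayer--Vietoris sequence collapses to the exact segment $\tH_i(Y_{R\setminus v}) \to \tH_i(Y_R) \to \tH_{i-1}(\lk_{Y_R} v) \xrightarrow{\partial} \tH_{i-1}(Y_{R \setminus v})$. On the matroid side, the contraction $M/v$ sits inside $\lk_Y v$ as a matroid complex on $V \setminus v$ with $\rho_{M/v}(S) = \rho(S \cup v) - 1$, so by the inductive hypothesis $L(\lk_Y v) \ge m_{\text{lk}} := \min_{\sigma \in \lk_Y v} \rho_{M/v}((V\setminus v) - \sigma)$. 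Because $\rho_{M/v}((V\setminus v)-\sigma) = \rho(V - \sigma) - 1$ and $\rho(V-\sigma) \ge m$ for every face $\sigma \in Y$, we get $m_{\text{lk}} \ge m - 1$. Thus there is $R' \sus V \setminus v$ and a nonzero class in $\tH_{m_{\text{lk}}-1}(\lk_{Y_{R}} v)$ with $R = R' \cup \{v\}$, and if this class lies in the kernel of $\partial$ into $\tH_{m_{\text{lk}}-1}(Y_{R'})$, then by exactness it lifts to a nonzero class in $\tH_{m_{\text{lk}}}(Y_R)$, giving $L(Y) \ge m_{\text{lk}} + 1 \ge m$, as desired.

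The crux --- and the step I expect to be the main obstacle --- is guaranteeing that the inductively produced link class actually survives, i.e. lies in the kernel of the connecting map $\partial$ into the deletion term. There is no general lower bound of the form $L(Y) \ge L(\lk_Y v) - 1$ (a full simplex kills all the homology of its links), so this is exactly where the matroid hypothesis must do real work: I would take $v$ to be a non-coloop (so $\rho(V \setminus v) = \rho(V)$) and exploit that matroid complexes have reduced homology concentrated in their top degree, together with the companion inclusion $M \setminus v \sus Y_{V \setminus v}$ and the rank identity $\rho_{M\setminus v} = \rho|_{V \setminus v}$, to pin down the deletion homology in the relevant degree and force $\partial$ to vanish on the chosen class. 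Managing this interaction between the connecting homomorphism and the deletion/contraction rank bookkeeping is the technical heart of the argument; an alternative that sidesteps the Mayer--Vietoris control is to pass to the symmetric algebraic shifting of $Y$, where the $d$-Leray condition becomes the combinatorial statement that no minimal non-face has more than $d+1$ vertices and the matroid inclusion constrains which faces appear, reducing the theorem to a counting argument on the shifted complex.
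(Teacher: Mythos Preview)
The paper does not prove this statement; it is quoted from \cite[Thm.~1.6]{KaMe}, so there is no in-paper proof to compare your attempt against.  I can only assess the proposal on its own terms.

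You have correctly located the gap and have not closed it.  From the Mayer--Vietoris segment, a nonzero class in $\tH_{m_{\text{lk}}-1}\bigl((\lk_Y v)_{R'}\bigr)$ either lifts to $\tH_{m_{\text{lk}}}(Y_R)$ (which is what you want) or maps to a nonzero class in $\tH_{m_{\text{lk}}-1}(Y_{R'})$, and this second branch yields only $L(Y) \ge m_{\text{lk}}$.  Since you establish nothing stronger than $m_{\text{lk}} \ge m-1$, the bad branch loses one unit and the induction degenerates.  Your proposed fix---take $v$ to be a non-coloop and use that matroid complexes have homology concentrated in top degree---does not do the job as stated: top-degree concentration is a statement about $\tH_\ast(M\setminus v)$ or $\tH_\ast(M/v)$, not about $\tH_\ast(Y_{R'})$, and nothing in your setup forces $\tH_{m_{\text{lk}}-1}(Y_{R'})=0$ for the particular $R'$ handed to you by the link induction.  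Running the induction on the deletion side instead gives the same loss: with $v$ a non-coloop one has $\rho_{M\setminus v}\bigl((V\setminus v)-\sigma\bigr)=\rho\bigl(V-(\sigma\cup\{v\})\bigr)\ge \rho(V-\sigma)-1\ge m-1$, so again only $m_{\text{del}}\ge m-1$.

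The shifting alternative in your last sentence is likewise only a hope.  Symmetric (or exterior) algebraic shifting preserves the $d$-Leray property of $Y$, but it does not in general preserve the inclusion $M\sus Y$; the assertion that ``the matroid inclusion constrains which faces appear'' in the shifted complex is exactly the thing that would need an argument, and you have not supplied one.  In short, the deletion--contraction/Mayer--Vietoris framework is a reasonable scaffold, but the proof is missing precisely the mechanism that links the two branches so that at least one of them reaches $m$ rather than $m-1$; that mechanism is the actual content of the Kalai--Meshulam argument.
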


In case we have a partition $V = V_1 \cup \cdots \cup V_{d+1}$ we get the following.

\begin{corollary} \label{ResCorTransv} Let $M$ be the transversal 
matroid on the sets $V_1, \ldots, V_{d+1}$, i.e. the bases consist of all
$S \sus V$ such that the cardinality of each $S \cap V_i$ is one, 
and let $Y$ be a $d$-Leray complex containing $M$.
Then there is a simplex $\tau$ such that $(V - \tau) \cap V_i$ 
is empty for some $i$, or in other words $\tau \sups V_i$. 
\end{corollary}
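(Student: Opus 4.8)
The plan is to derive Corollary \ref{ResCorTransv} directly from Theorem \ref{ResThmKM} by specializing to the transversal matroid $M$ on the partition $V = V_1 \cup \cdots \cup V_{d+1}$. First I would recall that the independent sets of this transversal matroid are precisely the \emph{partial transversals}: a set $S \sus V$ is independent iff $|S \cap V_i| \leq 1$ for every $i$. Equivalently, the rank function is $\rho(S) = \#\{i : S \cap V_i \neq \empt\}$, i.e. the number of color classes that $S$ meets. This is the standard description of a transversal (partition) matroid, and I would either cite Oxley \cite{Ox} or verify it in one line: any subset of $S$ picking at most one element from each block it meets is independent and has that cardinality, while no larger independent set inside $S$ exists.

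Next I would apply Theorem \ref{ResThmKM}: since $Y$ is $d$-Leray and $M \sus Y$, there is a simplex $\tau \in Y$ with $\rho(V - \tau) \leq d$. With the rank formula above this says that $V - \tau$ meets at most $d$ of the $d+1$ color classes. Hence there is at least one index $i$ with $(V - \tau) \cap V_i = \empt$, which is exactly the assertion that $\tau \sups V_i$. That completes the deduction, so the ``proof'' is essentially a translation of the rank condition $\rho(V-\tau)\le d$ into a statement about which blocks $\tau$ can fail to contain.

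The only real content is verifying that the transversal matroid has the claimed rank function; once that is in hand, the corollary is a one-line pigeonhole argument ($d+1$ blocks, at most $d$ of them met by $V-\tau$, so one is entirely inside $\tau$). So I would expect the main (minor) obstacle to be making the matroid-theoretic identification clean: strictly the transversal matroid on $V_1,\dots,V_{d+1}$ is defined via a system of distinct representatives for the blocks, and one should note that because the blocks here partition $V$ (rather than being an arbitrary family), the matroid is simply the partition matroid and its rank function is the block-count above. I would state this, give the one-line justification, and then conclude.

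\begin{proof}
The transversal matroid $M$ on the partition $V = V_1 \cup \cdots \cup V_{d+1}$ has as its independent sets exactly the partial transversals, i.e. the sets $S \sus V$ with $|S \cap V_i| \leq 1$ for all $i$; since the $V_i$ partition $V$, this is the partition matroid and its rank function is $\rho(S) = \#\{i : S \cap V_i \neq \empt\}$. Indeed, from any $S$ one may choose one element from each block it meets to obtain an independent subset of that cardinality, and no independent subset of $S$ can be larger. By Theorem \ref{ResThmKM} there is a simplex $\tau \in Y$ with $\rho(V - \tau) \leq d$, so $V - \tau$ meets at most $d$ of the $d+1$ blocks. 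Hence for some $i$ we have $(V - \tau) \cap V_i = \empt$, i.e. $V_i \sus \tau$.
\end{proof}
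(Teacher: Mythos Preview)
Your proof is correct and is exactly the intended specialization: the paper states the corollary immediately after Theorem \ref{ResThmKM} with no separate proof, relying on the reader to compute the rank function of the transversal (partition) matroid and apply pigeonhole, which is precisely what you do. Your identification $\rho(S)=\#\{i:S\cap V_i\neq\empt\}$ and the one-line deduction $\rho(V-\tau)\leq d\Rightarrow V_i\sus\tau$ for some $i$ match the paper's implicit argument.
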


In the monomial case it is not difficult, by polarizing, to prove
Theorem \ref{ResThmIdcol} as a consequence of Corollary \ref{ResCorTransv}.
However our result is a simultaneous generalization of this fact to arbitrary multigraded
ideals, to two parameters, $r$ the number of colors, and $d+1$ the regularity, 
and to higher syzygies of the ideal.

It is particularly worth noting that we do not 
prove Theorem \ref{ResThmIdcol} in any direct way, and we do not know how to
do it. Rather, it comes out as a special case of Theorem \ref{ResThmSyzcol},
which is proved by showing that it holds for the $\Omega^l$
by {\it descending} induction on $l$, starting from $\Omega^r$.

\subsection{Comparing the resolution of $I$ to resolutions of monomial ideals}

Let $T = \kk[y_1, \ldots, y_r]$. To start with we will look at 
monomial ideals $J$ in $T$ such that $T/J$ is artinian, i.e. $J$
contains an ideal $K = (y_1^{a_1}, \ldots, y_r^{a_r})$ generated by 
powers of variables. There is then a surjection $T/K \mto{p} T/J$ which
lifts to a map of minimal resolutions $\Bd \mto{\tilde{p}} \Ad$. 

\begin{lemma} \label{ColLemArt} 
Let $Te$ be the last term in the minimal resolution
$\Bd$ of $T/K$ (so $e$ has multidegree  $(a_1, \ldots, a_r)$),
and let $A_r = \oplus_1^n Te_i$ be the last term in the minimal free
resolution $\Ad$ of $T/J$. Suppose in the lifting $\Bd \mto{\tilde{p}}
\Ad$ of $T/K \pil T/J$ that $e \mapsto \sum m_i e_i$. Then
each $m_i \neq 0$. 
\end{lemma}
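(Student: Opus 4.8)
The plan is to use the fact that $\Bd$ is the Koszul complex on $y_1^{a_1}, \ldots, y_r^{a_r}$, so that its last term $Te$ is in homological degree $r$ and the map $\vardel_r^{\Bd} \colon Te \to B_{r-1}$ has components $\pm y_i^{a_i}$. The commutativity of the diagram of resolutions then forces, for each $i$, a relation between the $m_j$ and the entries of the differential $\vardel_r^{\Ad} \colon A_r \to A_{r-1}$. Since both resolutions are \emph{minimal}, all these differential entries lie in the maximal ideal $(y_1,\dots,y_r)$, and the key point is to exploit this together with the artinian hypothesis.

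**Key steps.** First, I would record that $\Bd \mto{\tp} \Ad$ is a lift of $p \colon T/K \to T/J$, so in particular in homological degree $r$ we have $\tp_r(e) = \sum_i m_i e_i$, and commutativity gives $\vardel_r^{\Ad}(\sum_i m_i e_i) = \tp_{r-1}(\vardel_r^{\Bd}(e))$. Second, I would argue by contradiction: suppose some $m_{i_0} = 0$. The idea is to localize or, better, to pass to a quotient that isolates the $i_0$-th variable. Concretely, consider the element $e_{i_0}$; since $A_r = \oplus_1^n Te_i$ is the last term of a minimal resolution of an artinian quotient $T/J$, each basis element $e_i$ sits in multidegree, say, $\bb_i$, and minimality forces $\bb_i$ to have full support $\{1,\dots,r\}$ — otherwise $e_i$ would contribute a syzygy supported on fewer than $r$ variables, impossible for the last module in a length-$r$ resolution of an artinian module (the projective dimension is exactly $r$ and the socle degrees all have full support). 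Third — and this is where the artinian structure really enters — because $e$ has multidegree $(a_1,\dots,a_r)$ and $\tp_r$ is multigraded, $m_i$ is a monomial (scalar multiple) of multidegree $(a_1,\dots,a_r) - \bb_i$, which is forced to be componentwise nonnegative whenever $m_i \neq 0$. So the real content is: some $m_i$ must be nonzero for each coordinate direction, and in fact we want \emph{every} $m_i \neq 0$.

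**The cleanest route.** Rather than the above, I expect the actual argument to go as follows, which I would write up: apply $-\otimes_T T/(y_1,\dots, \hat{y_j},\dots,y_r) =: T' = \kk[y_j]$ for a fixed $j$, or more slickly, observe that it suffices to show $m_i \neq 0$ by checking that the dual map on $\Ext^r$ is nonzero. Dualizing the lift $\tp \colon \Bd \to \Ad$ and taking cohomology, we get a map $\Ext^r_T(T/J, T) \to \Ext^r_T(T/K, T)$, which on top-degree pieces is a map $\omega_{T/J} \to \omega_{T/K}$ of canonical modules; the entry $m_i$ records the image of the $i$-th generator of $\omega_{T/J}$. Since $T/J \twoheadleftarrow$ no — rather $\Hom(T/J,T/K)$ direction — the surjection $T/K \twoheadrightarrow T/J$ induces an \emph{injection} $\omega_{T/J} \hookrightarrow \omega_{T/K}$ (applying $\Hom(-,\omega)$ to a surjection of Cohen–Macaulay modules of the same dimension gives an injection on canonical modules). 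Because $\omega_{T/K}$ is cyclic (it is $T/K$ up to twist, $K$ being a complete intersection) and the injection is multigraded, each generator of $\omega_{T/J}$ — there are $n$ of them, corresponding to the $e_i$ — must map to a \emph{nonzero} multiple of the generator of $\omega_{T/K}$, i.e. $m_i \neq 0$ for all $i$.

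**Main obstacle.** The delicate point is establishing that the induced map $\omega_{T/J} \to \omega_{T/K}$ is the one literally computed by the $m_i$ in the lifted complex — i.e. matching the ``homological algebra'' description (entries of $\tp_r$) with the ``duality'' description (map of canonical modules) — and then pinning down that injectivity of a multigraded map into a \emph{cyclic} module forces each of the finitely many multigraded generators of the source to have nonzero image. The injectivity of $\omega_{T/J}\hookrightarrow\omega_{T/K}$ itself is standard (it is $\gext^0$ of a surjection of maximal Cohen–Macaulay modules, here artinian so trivially CM, using that $\gext^{>0}$ vanishes appropriately), but care is needed because $T/K\to T/J$ is a surjection, not an inclusion, so one must be careful about which functoriality direction produces the injection. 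I would isolate that as the one lemma to verify carefully; everything else is bookkeeping with the Koszul complex $\Bd$ and multigraded degree counts.
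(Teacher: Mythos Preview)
Your ``cleanest route'' is exactly the paper's proof: dualize the lift $\tilde{p}$ to obtain a map $\Ext^r_T(T/J,\omega_T)\to\Ext^r_T(T/K,\omega_T)$, identify this with the injection $(T/J)^*\hookrightarrow(T/K)^*$ of $\kk$-linear duals (which is immediate from the surjection $T/K\twoheadrightarrow T/J$), and observe that under the dualized map in homological degree $r$ the dual basis element $u_i$ goes to $m_i u$, so $m_i=0$ would kill a minimal generator of $(T/J)^*$, contradicting injectivity. The only cosmetic difference is that the paper phrases the target of the $\Ext$ map as the vector-space dual $(T/J)^*$ rather than as a canonical module, which makes the injectivity step a one-liner and sidesteps the Cohen--Macaulay machinery you invoke; your worry about matching the two descriptions is handled simply by noting that $\Hom_T(\Ad,\omega_T)$ is a free resolution of $(T/J)^*$ since $T/J$ is artinian.
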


\begin{proof} Let $\omega_T \iso S(-\bfen)$ be the canonical module of $T$.
Dualizing the map $\tilde{p}$ we get a map
\[ \Hom_T(\Ad, \omega_T) \mto{\Hom_T(\tilde{p}, \omega_T)}
\Hom_T(\Bd, \omega_T). \]
Since $T/K$ and $T/J$ are artinian, this map will be a lifting
of the map
\[ \Ext^r_T(T/J, \omega_T) \pil \Ext^r_T(T/K, \omega_T) \]
 to their minimal free resolutions. But this map is simply
\begin{equation} \label{ColLigTJK} (T/J)^* \pil (T/K)^*,
\end{equation}
where $()^*$ denotes the vector space dual $\Hom_\kk(-,\kk)$, 
and so this map is injective.

Now $\Hom_T(A_r, \omega_T)$ is $\oplus_1^n Tu_i$ where the $u_i$ are
a dual basis of the $e_i$, and correspondingly let $u$ be the dual 
basis element of $e$. Then we will have
\[ u_i \mtoelm{\Hom_T(\tilde{p}, \omega_T)} m_i u. \]
Each $u_i$ maps to a minimal generator of $(T/J)^*$. If $m_i$ was $0$, 
this generator would map to $0$ in $(T/K)^*$, but since (\ref{ColLigTJK}) 
is injective this does not happen.
\end{proof}

Let $\lambda : X \pil \kk$ be a function associating to each variable
in $X$ a consant in $\kk$. 
There is then a map 
\begin{equation}
\label{ColLigPl} p_\lambda : S = \kk[X] \pil \kk[y_1, \ldots, y_r] = T
\end{equation}
sending each element $x$ in $X_i$ to $\lambda(x)y_i$. 
Given a multigraded ideal $I$ in $S$ 
with elements of pure color $i$ for each $i$, we can compare its 
regularity to the regularity of ideals in $T$.

\begin{theorem} \label{ColThmReg}
Let $I \sus S$ be a multigraded ideal with elements of pure color $i$
for each $i$.
The image $p_\lambda(I)$ by the map (\ref{ColLigPl}) is then a monomial
ideal.  Let $\lambda$ be sufficiently general so that for each $i$
some element of pure color $i$ in $I$ has nonzero image.
If $J \sus T$ is an ideal containing
the image $p_\lambda(I)$, its regularity is less than or equal to the 
regularity of $I$.
\end{theorem}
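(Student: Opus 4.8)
The plan is to push everything down to the Artinian quotient $T/p_\lambda(I)$ and then compare it with $S/I$ through a generic linear section. First, $p_\lambda(I)$ is a monomial ideal: $I$ is generated by $\NN^r$-homogeneous elements, and any element of $I$ of color vector $\aa$ is a $\kk$-combination of monomials, each of which $p_\lambda$ carries to a scalar multiple of $y^\aa=y_1^{a_1}\cdots y_r^{a_r}$, so its image is a scalar multiple of $y^\aa$. Next, if $f_i\in I$ has pure color $i$ and degree $r_i$ and $p_\lambda(f_i)\ne 0$, then necessarily $p_\lambda(f_i)=c_iy_i^{r_i}$ with $c_i\ne 0$; hence $p_\lambda(I)\supseteq (y_1^{r_1},\ldots,y_r^{r_r})$, so $p_\lambda(I)$ and a fortiori $J$ are $\mm_T$-primary and $T/p_\lambda(I)$, $T/J$ are Artinian. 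For a nonzero Artinian graded algebra the regularity equals the largest degree in which it is nonzero, so from the surjection $T/p_\lambda(I)\pils T/J$ we get $\text{reg}(J)=\text{reg}(T/J)+1\le \text{reg}(T/p_\lambda(I))+1=\text{reg}(p_\lambda(I))$. Thus it suffices to prove $\text{reg}(T/p_\lambda(I))\le \text{reg}(S/I)$.

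Since $\lambda$ is not identically zero on any $X_i$, the map $p_\lambda$ is surjective, so $T/p_\lambda(I)\cong S/(I+\ker p_\lambda)$, and $\ker p_\lambda$ is generated by its degree-one piece $W:=(\ker p_\lambda)_1\subseteq S_1$; set $c=\dim_\kk W$. The core of the proof is the claim that a generic basis $\ell_1,\ldots,\ell_c$ of $W$ is a filter-regular sequence on $S/I$ (with respect to the standard $\ZZ$-grading). The point is that each module $S/(I+(\ell_1,\ldots,\ell_k))$ is annihilated by $I$, so any associated prime $\mathfrak{p}$ of it that also contains $\ker p_\lambda$ would contain $I+\ker p_\lambda$; but $I+\ker p_\lambda$ is $\mm_S$-primary, being the $p_\lambda$-preimage of the $\mm_T$-primary ideal $p_\lambda(I)$, whence $\mathfrak{p}=\mm_S$ is maximal. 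So $W$ is contained in no non-maximal associated prime of $S/(I+(\ell_1,\ldots,\ell_k))$, and therefore a generic element of $W$ avoids all of these primes, i.e. is filter-regular on this module; proceeding inductively, and choosing the successive elements linearly independent (possible since $\dim W=c$), produces the desired filter-regular sequence, which moreover generates $\ker p_\lambda$. Here we may assume $\kk$ infinite, enlarging it if necessary, since this changes neither side.

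Finally I would invoke the standard fact that for a filter-regular element $\ell$ on a finitely generated graded module $M$ one has $\text{reg}(M/\ell M)\le \text{reg}(M)$: this follows from the two short exact sequences extracted from $0\to (0:_M\ell)(-1)\to M(-1)\mto{\ell}M\to M/\ell M\to 0$, using that $0:_M\ell$ has finite length and hence lies in $H^0_{\mm}(M)$, so that $\text{reg}(0:_M\ell)\le \text{reg}(M)$. Iterating along the filter-regular sequence $\ell_1,\ldots,\ell_c$ gives $\text{reg}\bigl((S/I)/(\ell_1,\ldots,\ell_c)\bigr)\le \text{reg}(S/I)$, and since $(\ell_1,\ldots,\ell_c)=\ker p_\lambda$ this says $\text{reg}(T/p_\lambda(I))\le \text{reg}(S/I)$. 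Combined with the reduction of the first paragraph, $\text{reg}(J)\le \text{reg}(p_\lambda(I))=\text{reg}(T/p_\lambda(I))+1\le \text{reg}(S/I)+1=\text{reg}(I)$.

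The one place where the hypotheses genuinely enter is the filter-regularity claim, and it hinges entirely on $I+\ker p_\lambda$ being $\mm_S$-primary --- exactly what the presence of pure-color elements together with the genericity of $\lambda$ secures. I expect the care to be concentrated there: one must work with a generic basis of the fixed space $W=(\ker p_\lambda)_1$ rather than with the obvious generators $\lambda(x_{i,0})x_{i,j}-\lambda(x_{i,j})x_{i,0}$, which need not individually be filter-regular, and one must correctly quote the two homological inputs (regularity equals the top degree for Artinian algebras, and regularity does not increase under a filter-regular quotient).
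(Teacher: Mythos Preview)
Your argument is correct and takes a genuinely different route from the paper's. You reduce to $J=p_\lambda(I)$ via the Artinian top-degree characterization, then show that a generic basis of $W=(\ker p_\lambda)_1$ is a filter-regular sequence on $S/I$ because every associated prime of the intermediate quotients containing $W$ must contain the $\mm_S$-primary ideal $I+\ker p_\lambda$; iterating the standard inequality $\text{reg}(M/\ell M)\le\text{reg}(M)$ for filter-regular $\ell$ finishes the job. The paper instead compares resolutions directly: choosing pure-color elements $P_i\in I$ mapping to $y_i^{a_i}$, it forms the commutative square with $S/H\to T/K$ ($H=(P_1,\dots,P_r)$, $K=(y_1^{a_1},\dots,y_r^{a_r})$), lifts to minimal resolutions, and invokes Lemma~\ref{ColLemArt} (a duality argument) to see that in the induced map $F_r\to A_r$ \emph{every} basis element of $A_r$ receives a nonzero contribution from some basis element of $F_r$; the degree comparison then yields the regularity bound.

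Your approach is arguably more streamlined for the bare statement and calls only on standard local-cohomology facts. The paper's approach, however, buys something you do not obtain: the explicit conclusion that in homological degree $r$ every generator of $A_r$ is hit by a generator of $F_r$. This is not incidental---the proof of Theorem~\ref{ResThmSyzcol} begins precisely with ``by the proof of Theorem~\ref{ColThmReg}'' and uses that lifting in degree $r$ as the base of a descending induction over the $\Omega^l$. So while your proof establishes Theorem~\ref{ColThmReg} cleanly, it would leave a gap if substituted into the paper's overall architecture, and the resolution-comparison argument would still need to be supplied separately.
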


\begin{proof}
Let $P_i$ be elements of pure color $i$ in $I$ for $i = 1, \ldots, r$, such
that $P_i$ maps to a nonzero multiple of $y_i^{a_i}$. Let $H$ be the ideal
$(P_1, \ldots, P_r)$. We get a commutative diagram
\[ \begin{CD}
S/H @>>> T/K \\
@VVV @VVV \\
S/I @>>> T/J.
\end{CD} \] 
In the minimal free resolution $\Ad$ of $T/J$ let $A_r = \oplus_1^m Te_i$.
Since $T/J $ is artinian, its regularity is $\max \{ \deg(e_i) - r \}$. 
Let $\Fd$ be the minimal free resolution of $S/I$ and let 
$F_r = \oplus_1^{m^\prime} Sf_i$. The maps in the diagram above lift
to maps of free resolutions. In homological degree $r$ of the resolution
of $S/H$ we have a free $S$-module of rank one. If we consider its image
in $T/J$ and use Lemma \ref{ColLemArt} together with the commutativity
of the diagram, we see that by the map $F_r \mto{\tilde{p}_r} A_r$
there must for every $e_i$ exist an $f_j$ such that the composition
\[ Sf_j \pil \Fd \pil \Ad \pil Te_i \]
is nonzero. But then 
\[ \max \{ \deg(f_j) - r \} \geq \max \{ \deg(e_i) - r\} \]
so we get our statement.
\end{proof}

\begin{corollary} If $I$ has linear resolution, then $p_\lambda(I)$ 
has linear resolution (with the generality assumption on $\lambda)$.
\end{corollary}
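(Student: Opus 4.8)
The plan is to read the statement off Theorem~\ref{ColThmReg}. Saying that $I$ has linear resolution means precisely that $I$ is generated in a single degree, say $d+1$, and is $(d+1)$-regular (the equivalence already invoked around Theorem~\ref{ResThmSyzcol}); in that case the regularity of $I$ equals $d+1$. So the first step is to apply Theorem~\ref{ColThmReg} with $J := p_\lambda(I)$ itself, the image ideal. This is legitimate under the standing genericity hypothesis on $\lambda$ that is built into that theorem, and it gives at once that the regularity of $p_\lambda(I)$ is at most the regularity of $I$, i.e.\ at most $d+1$.

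The second step is the matching lower bound, which also pins down the degree of generation. Since $p_\lambda : S \pil T$ is a homomorphism of $\NN^r$-graded (in particular $\NN$-graded) rings of degree $0$, the image $p_\lambda(I)$ is a homogeneous ideal, and from $I = S \cdot I_{d+1}$ we get $p_\lambda(I) = T \cdot p_\lambda(I_{d+1})$ with $p_\lambda(I_{d+1}) \sus T_{d+1}$; hence every minimal generator of $p_\lambda(I)$ lies in degree $d+1$. Moreover $p_\lambda(I) \neq 0$, because the genericity assumption guarantees that some pure-color-$i$ element of $I$ maps to a nonzero element. Thus $p_\lambda(I)$ is a nonzero ideal generated in degree $d+1$, so its regularity is at least $d+1$. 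Combining with the first step, the regularity of $p_\lambda(I)$ equals $d+1$, and an ideal generated in degree $d+1$ with regularity $d+1$ has linear resolution, which is the assertion.

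I do not expect a genuine obstacle, since all the substance has been placed in Theorem~\ref{ColThmReg} and this is a two-line deduction. The only points that need a word of justification are the two bits of bookkeeping: that the image of an ideal generated in degree $d+1$ under the graded map onto the subring generated by the $y_i$ is again generated in degree $d+1$, and the standard equivalence between ``generated in degree $d+1$ and $(d+1)$-regular'' and ``has linear resolution'' --- both routine and already used elsewhere in the paper.
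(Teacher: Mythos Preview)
Your proposal is correct and is exactly the intended deduction: the paper states the corollary without proof immediately after Theorem~\ref{ColThmReg}, meaning it is to be read off from that theorem in precisely the way you describe. Your two observations --- that $p_\lambda$ preserves total degree so $p_\lambda(I)$ is still generated in degree $d+1$, and that Theorem~\ref{ColThmReg} with $J = p_\lambda(I)$ then forces the regularity to equal $d+1$ --- are the whole content.
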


\spm Is the corollary above true without the hypothesis on $I$ that
it contains elements of pure color $i$ for each $i$?
\spmfin

Now we shall proceed to prove Theorem \ref{ResThmSyzcol} and thereby
also the special case Theorem \ref{ResThmIdcol}. 

\begin{proof}[Proof of Theorem \ref{ResThmSyzcol}]
Let $J = \mm^{d+1}$ be the power of the maximal ideal in $T$. 
There is then a completely explicit form of the minimal
free resolution $\Ad$ of $\mm^{d+1}$, the Eliahou-Kervaire resolution, 
\cite{EK} or \cite{PeSt}, which we now describe in this particular case. 
For a monomial
$m$ let $\max(m) = \max \{ q \, | \, y_q \text{ divides } m \}$. 
Let $A_p$ be the free $T$-module with basis elements $(m; j_1, \ldots, j_{p-1})$
where $m$ is a monomial of degree $d+1$ and 
$1 \leq j_1 < \cdots < j_{p-1} < \max(m)$. 
These basis elements are considered to have degree $d+p$.
The differential of $\Ad$ is given by sending a basis element
$(my_k ; j_1, \ldots, j_{p-1})$ with  $\max(my_k) = k$ to 
\begin{eqnarray} \label{ColLigDiff} 
& & \sum_q (-1)^q y_{j_q} (my_k; j_1, \ldots, \hat{j_q}, \ldots, j_{p-1}) \\
& - & \sum_q (-1)^q y_k (my_{j_q};  j_1, \ldots, \hat{j_q}, \ldots, j_{p-1}).
\notag
\end{eqnarray}
(If a term in the second sum has $\max(my_{j_q}) \geq j_{p-1}$, the term
is considered to be zero.)
In homological degree $r$ we know by the proof of Theorem \ref{ColThmReg}
that for each basis element $e = (my_r; 1,2,\ldots, r-1)$ of $A_r$ there
is a basis element $f$ of $F_r$ such that the composition
\[ Sf \pil F_r \pil A_r \pil Te \]
maps $f$  to $ne$ where $n$ is a nonzero monomial. 
Since the $f$ and $e$ have the same total degree (the resolution is linear), 
we may assume that the
monomial $n$ is $1$. Since the map $F_r \mto{\tilde{p}_r} A_r$ 
is multihomogeneous and every two of  the basis elements of $A_r$ have 
different multidegrees, we will in fact have $f \mtoelm{\tilde{p}_r} e$.
This shows that the theorem holds when $l = r$.

We will now show Theorem \ref{ResThmSyzcol} 
by descending induction on $l$. 
Let $(my_k; \Jc^\prime)$ be a given basis element of $A_l$ where $|\Jc^\prime|= l-1$ and 
$\max(my_k) = k > \max(\Jc^\prime)$.

\medskip 

1. Suppose $k > l$. Then 
there is an $r < k$
not in $\Jc^\prime$, and let $\Jc = \Jc^\prime \cup \{r\}$.
Consider now the image of  $(my_k;\Jc)$ given by (\ref{ColLigDiff}).
No other basis element involved in this image has the same multidegree
as $(my_k; \Jc^\prime)$. By induction we assume 
there is an element $f$ in $F_{l+1}$, that forms part of a basis of $F_{l+1}$, such 
that $f \mtoelm{\tilde{p}_{l+1}} (my_k; \Jc)$. The differential in $\Fd$
maps $f$ to $\sum_1^{|X|} x_i f_i$ where each $f_i$ may be considered a
basis element of $F_l$ if nonzero. 
Since the map $\tilde{p}_l$ is homogeneous, a scalar multiple of some $f_i$ 
must map to $(my_k; \Jc^\prime)$. 

\medskip

2. Suppose $k = l$. 
Then $\Jc^\prime = \{1,2, \ldots, k-1 \}$ and let $\Jc = \Jc^\prime \cup \{k\}$. 
The image of 
$(my_{k+1};\Jc)$ by the differential in $\Ad$ will be
\begin{equation} \label{ColLigDiff2} 
\sum_{q = 1}^k(-1)^q y_q(my_{k+1}; \Jc \backslash \{ q \}) \\
- \sum_{q=1}^k (-1)^q y_{k+1} (my_q;\Jc \backslash \{q\}) .
\end{equation}
Since $\max(my_q) \leq k$ and $\max \Jc \backslash \{q\} = k$ when $q \neq k$, 
all the terms in the second sum become zero save
\[ (-1)^k y_{k+1} (my_k; \Jc^\prime) \]
and this term is the only one in (\ref{ColLigDiff2}) involving a
basis element with the multidegree of $(my_k; \Jc^\prime)$. 
By the same argument as in 1. above, there is a basis element $f$
in $F_l$  that maps to $(my_k;\Jc^\prime)$. 
This concludes the proof of the theorem.
\end{proof}

The following is a consequence of Theorem \ref{ResThmIdcol}.

\begin{corollary} Let $J$ in $\kk[y_1, \ldots, y_r]$ be a monomial ideal with linear
resolution generated in degree $d+1$. If $J$ contains the pure powers $y_i^{d+1}$ 
for each $i$, then $J$ is $(y_1, \ldots, y_r)^{d+1}$.
\end{corollary}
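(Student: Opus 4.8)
The plan is to apply Theorem \ref{ResThmIdcol} to the ideal $J$ itself, which produces generators in every color vector, and then to argue that such an ideal with linear resolution generated in degree $d+1$ must be the full power $\mm^{d+1} = (y_1,\ldots,y_r)^{d+1}$. First I would note that $J$ satisfies the hypotheses of Theorem \ref{ResThmIdcol}: it is homogeneous for the $\NN^r$-grading on $T = \kk[y_1,\ldots,y_r]$ in which each $y_i$ has degree $\ep_i$ (every monomial ideal is $\NN^r$-graded), it is $(d+1)$-regular since it has a linear resolution generated in degree $d+1$, and it contains the pure-color elements $y_i^{d+1}$ for each $i$. Hence for every color vector $\aa = (a_1,\ldots,a_r)$ with $\sum a_i = d+1$, the ideal $J$ contains an element of multidegree $\aa$.

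Next I would observe that a monomial ideal is graded by $\NN^r$ with one-dimensional graded pieces in each monomial degree, so "$J$ contains an element of multidegree $\aa$" is equivalent to "the monomial $\yy^\aa = y_1^{a_1}\cdots y_r^{a_r}$ lies in $J$." Since the monomials of degree $d+1$ in $T$ are exactly the $\yy^\aa$ with $\sum a_i = d+1$, we conclude that every degree-$(d+1)$ monomial of $T$ belongs to $J$, i.e. $\mm^{d+1} \sus J$.

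Finally I would use that $J$ is generated in degree $d+1$: its generators all have degree $d+1$, hence lie in $\mm^{d+1}$, so $J \sus \mm^{d+1}$. Combining the two inclusions gives $J = \mm^{d+1} = (y_1,\ldots,y_r)^{d+1}$, as claimed. I do not expect a genuine obstacle here; the only point requiring a moment's care is the translation between "$J_\aa \neq 0$" and "$\yy^\aa \in J$," which is immediate from $J$ being a monomial (hence $\NN^r$-graded with at most one-dimensional pieces) ideal, together with noticing that the degree-$(d+1)$ part of $\mm^{d+1}$ is spanned precisely by the monomials of the relevant color vectors.
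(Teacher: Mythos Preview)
Your argument is correct and matches the paper's intended derivation of the corollary as a direct consequence of Theorem~\ref{ResThmIdcol}; the paper does not spell out the steps but simply introduces the corollary with the sentence ``The following is a consequence of Theorem~\ref{ResThmIdcol}.'' The paper also remarks on an alternative proof via the socle of $T/J$ (which, by linearity of the resolution, is concentrated in degree $d$), but your approach is the one signalled as primary.
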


This also follows from the fact that the vector space dimensions of the graded pieces of the
socle of $S/J$ is determined by the last
term of the resolution of $S/J$. The resolution being linear means that the socle of 
$S/J$ is concentrated in degree one lower than the generators of $J$.  

Let $\Delta(d+1)$ be the geometric simplex in ${\mathbb R}^r$ 
defined as the convex hull of all $r$-tuples $(a_1, \ldots, a_r)$
of non-negative integers with $\sum a_i = d+1$. The monomials of degree $d+1$ of a
monomial ideal in $\kk[y_1, \ldots, y_r]$ may be identified with a subset of 
$\Delta(d+1)$. Under 
the hypothesis of $d+1$-linearity the corollary above 
says that if the ideal contains the extreme points of $\Delta(d+1)$ then it
contains all of its lattice points. This suggests the following more general problem.

\begin{problem}
Let $J$ in $\kk[y_1, \ldots, y_r]$ be a monomial ideal with linear resolution
generated in degree $d+1$. What can be said of the ``topology'' of the generating monomials
of $J$ considered as elements of $\Delta(d+1)$?
\end{problem}

\end{document}